\pgfplotsset{compat=newest}
\newtheorem{theorem}{Theorem}[section]
\newtheorem{lemma}[theorem]{Lemma}
\newtheorem{proposition}[theorem]{Proposition}
\newtheorem{corollary}[theorem]{Corollary}
\theoremstyle{definition}
\newtheorem{definition}[theorem]{Definition}
\newtheorem{example}[theorem]{Example}
\theoremstyle{remark}
\newtheorem{remark}[theorem]{Remark}
\numberwithin{equation}{section}
\def\C{\mathbb C}
\def\D{\mathbb D}
\def\ov{\overline}
\def\Ha{\mathbb H}
\def\R{\mathbb R}
\renewcommand{\Re}{\operatorname{Re}}
\renewcommand{\Im}{\operatorname{Im}}
\begin{document}

\numberwithin{equation}{section}
	
	\title[Failure of the Denjoy-Wolff Theorem]
	{On the failure of the Denjoy-Wolff Theorem in convex domains}

	\author{Filippo Bracci}
	\author{Ahmed Yekta \"Okten}  
	
	\address{F.Bracci, A. Y. \"Okten:  Dipartimento di Matematica, Universit\`a Di Roma ``Tor Vergata''€, Via Della Ricerca Scientifica 1, 00133 Roma, Italy}

	\email{fbracci@mat.uniroma2.it}

	\email{okten@mat.uniroma2.it}

	\thanks{Partially supported by the MIUR Excellence Department Project 2023-2027 MatMod@Tov awarded to the Department of Mathematics, University of Rome Tor Vergata, by PRIN (2022) Real and Complex Manifolds: Geometry and holomorphic dynamics Ref: 2022AP8HZ9 and by GNSAGA of INdAM}

	\begin{abstract}  
In this note, we construct examples of bounded smooth convex domains with no non-trivial analytic discs on the boundary which possess a holomorphic self-map without fixed points so that the iterates do not converge to a point (that is, the Denjoy-Wolff theorem does not hold). We also show that, in the case of bounded convex domains with $C^{1+\varepsilon}$-smooth boundary which have non-trivial analytic discs on the boundary, the cluster set of the orbits of holomorphic self-maps without fixed points can be equal to the principal part of any prime end of any planar bounded simply connected domain.		
 \end{abstract}

	\subjclass[2020]{30C35; 30D05; 30D40; 32F45.}
	
	\keywords{Iteration theory, Denjoy-Wolff theorems, Kobayashi distance, Kobayashi-Royden pseudometric, visibility}
	
	\maketitle

\section{Introduction}

Let $\Omega\subset \C^N$ be a bounded domain, $N\geq 1$. Let $F\in \mathcal{O}(\Omega,\Omega)$ be a holomorphic self-map of $\Omega$. The {\sl target set of $F$} is the set 
\begin{equation*}
\begin{split}
T(F):=\{z\in \partial{\Omega}: \exists z_0 \in \Omega,  \{n_k\}\subset \mathbb N \text{\ such that} \:\: \lim_{k\to \infty}n_k=\infty, \lim_{k\to\infty}F^{\circ n_k}(z_0)= z\}.
\end{split}
\end{equation*}

\begin{definition}\label{def:wolffdenjoyproperty}
We say that a bounded domain $\Omega$ in $\C^N$, $N\geq 1$, satisfies \emph{the Denjoy-Wolff property} if for any holomorphic self-map $F$ of $\Omega$ either $T(F)=\emptyset$ or  there exists a point $p\in\partial\Omega$ such that the sequence of iterates $\{F^{\circ n}\}$ of $F$ converges uniformly on compacta to the constant map $z\mapsto p$, that is $T(F)=\{p\}$. 
\end{definition}

It is a well known and classical result in complex analysis that the unit disc $\D$ in $\mathbb{C}$ satisfies the Denjoy-Wolff property. This fact was independently proven by Denjoy and Wolff \cite{Den, Wol1,Wol2} at the beginning of the last century. Since then, it has been a fundamental tool in the iteration theory of holomorphic maps and it has been extended in many different direction by several mathematicians (see, {\sl e.g.}, \cite{AbateTaut}, and \cite{Kar}  in the context of  distance non-expansive maps of Gromov hyperbolic metric spaces).  

If $\Omega$ is convex, Abate (see, \cite[Theorem~2.4.20]{AbateTaut}) proved that if $f\in \mathcal{O}(\Omega,\Omega)$ then   $T(F)=\emptyset$ if and only if $f$ has fixed points in $\Omega$. Thus, for a bounded convex domain $\Omega$  the Denjoy-Wolff property holds if and only if $T(F)$ contains only one point for every holomorphic self-map $F$ of $\Omega$ without fixed points.

The first generalization to higher dimensions of the classical Denjoy-Wolff theorem appeared in the second half of the last century, when Herv\'e \cite{Her2} proved that the unit ball $\mathbb B^N$ of $\mathbb C^N$ verifies the Denjoy-Wolff property. Abate in \cite{Aba, AbateTaut} extended the result of Herv\'e to strongly convex domains with $C^2$-smooth boundaries. Later Budzy\'nska \cite{Bud} (see also \cite{AR}) removed the boundary regularity assumptions and extended this result to strictly convex domains, while in \cite{BGZ} the same result has been proved for bounded convex domains which are Gromov hyperbolic with respect to the Kobayashi distance---such a class includes, for instance, smooth D'Angelo finite type bounded convex domains (see \cite{Zi1, Zi2}). See also \cite{BKR, CR} and references therein for relevant results about the Denjoy-Wolff theorem in complex Banach spaces and in  symmetric domains.

However, the basic feature of Gromov hyperbolicity which allows the Denjoy-Wolff theorem is the ``bending inside'' property of geodesics. Based on such observation, recently, Bharali and Zimmer \cite{BZ} (see also \cite{BNT}) introduced the notion of ``visibility'' and Bharali and Maitra \cite[Theorem 1.8]{BM} proved if $\Omega$ is a visible bounded convex domain in $\C^N$ then it satisfies the Denjoy-Wolff property. 

However, in general, visibility is a stronger  condition that the Denjoy-Wolff property: in \cite[Corollary 1.6]{BB} it is proved that there exist  bounded simply connected  domains in $\C$  which are not visible but for which the Denjoy-Wolff property  holds. 

On the other hand, the only known examples in the literature of  convex domains not having the Denjoy-Wolff property are product domains:  if $D_1\subset \C^{N_1}$, $D_2\subset \C^{N_2}$ are two bounded convex domains and $F\in  \mathcal{O}(D_1,D_1)$ is any holomorphic self-map without fixed points, then the holomorphic self-map $D_1\times D_2\ni (z,w)\mapsto (f(z), w)$ of the bounded convex domain $\Omega:=D_1\times D_2$ does not have fixed points in $\Omega$ and its target set contains $\{p\}\times D_2$ for some $p\in \partial D_1$. In particular, $\Omega$ does not have the Denjoy-Wolff property. Even in this case however, it is not clear what the target set could really be. We mention here that in the bidisc $\D^2$,  Herv\'e \cite{Her} proved that the target set of a holomorphic self-map of $\D^2$ has to be contained in either $\{e^{i\theta}\}\times \overline{\mathbb D}$ or in $\overline{\mathbb D}\times \{e^{i\theta}\}$ (for some $\theta\in \R$). So, apparently, the existence of non-trivial analytic discs on the boundary is the primary obstruction of the Denjoy-Wolff property. 

The main aim of this paper is to deal with  (non-trivial) constructions of holomorphic self-maps of   bounded convex domains for which the Denjoy-Wolff property fails and to examine the possible target set in case the domain contains a non-trivial analytic disc in the boundary. Indeed it turns out that there exist bounded convex domains with smooth boundaries containing no non-trivial analytic discs for which the Denjoy-Wolff property does not hold:

\begin{theorem}\label{cor:existence-nonDW}
There exists a bounded convex domain  $\Omega\subset \C^2$,  such that $\partial \Omega$ is $C^\infty$-smooth,  $\partial \Omega$ does not contain  non-trivial analytic discs  and $\Omega$ does not have  the Denjoy-Wolff property. 
\end{theorem}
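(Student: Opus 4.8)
The plan is to deduce Theorem~\ref{cor:existence-nonDW} from one explicit example: a $C^\infty$-smooth bounded convex $\Omega\subset\C^2$ with no non-trivial analytic disc in $\partial\Omega$, together with a fixed-point-free $F\in\hol(\Omega,\Omega)$ whose orbits do not converge. First, a reduction. Since $\Omega$ is bounded and convex and $F$ has no fixed point, by Abate's theorem recalled above the iterates $\{F^{\circ n}\}$ are compactly divergent, so every subsequential locally uniform limit is a holomorphic map $\Omega\to\partial\Omega$; because $\partial\Omega$ carries no non-trivial analytic disc, each such limit is a constant of $\partial\Omega$. Hence $T(F)$ is precisely the set of these limit constants, and the Denjoy--Wolff property fails exactly when this set has at least two points---equivalently, when one single orbit has two distinct accumulation points. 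It therefore suffices to construct one pair $(\Omega,F)$ as above with one orbit of this kind.

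For the domain I would take $\Omega$ to be $C^\infty$ and strictly convex except on a neighbourhood of a single distinguished boundary point $\xi_0$, where, in suitable holomorphic coordinates $(z,w)$ with $\xi_0=0$, it coincides with the model $\{(z,w):\Re w+\psi(z)<0\}$; here $\psi\ge 0$ is $C^\infty$, convex in $(\Re z,\Im z)$, strictly positive for $0<|z|$ small, and vanishes to infinite order at $z=0$ (for instance $\psi(z)=\eps\,\chi(z)\,e^{-1/|z|^2}$ with a radial cut-off $\chi$), and one glues this model to a strictly convex smooth bounded ambient domain, preserving convexity and smoothness by the standard mollification of maxima of convex functions. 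This model is convex but not strictly convex, its boundary near $\xi_0$ being ruled by the totally real segments $\{z=z_*\}\times\{\Re w=-\psi(z_*)\}$; since no non-constant analytic disc lies in a totally real set, any analytic disc in $\partial\Omega$ near $\xi_0$ must project nontrivially to the $z$-plane, and then $\zeta\mapsto(z(\zeta),w(\zeta))$ with $\Re w(\zeta)=-\psi(z(\zeta))$ and $\Re w$ harmonic forces, by a maximum-principle argument (using $\psi^{-1}(0)=\{0\}$ and that $\psi\circ g$ is non-harmonic for non-constant holomorphic $g$), that $z$ is constant and hence that the disc is constant. Thus $\partial\Omega$ has no non-trivial analytic disc, while $\xi_0$ is a boundary point of infinite type---the site where visibility, and the Denjoy--Wolff property, will fail.

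The heart of the matter is the construction of $F$, and here the difficulty is structural: in a bounded convex domain every fixed-point-free self-map drives its orbits toward the boundary, and near the infinitely thin region at $\xi_0$ this forces contraction in the $z$-variable, which---exactly as in strictly convex, finite type, or visible domains---tends to collapse every orbit to a single boundary point. The role of the infinite-order flatness of $\psi$ is to defeat this: because $\psi$ vanishes faster than any power, the cross-sections $\{z:\psi(z)<\eta\}$ of the model shrink only like $(\log(1/\eta))^{-1/2}$ as $\eta\to 0^+$, so the contraction in $z$ along an orbit running toward $\xi_0$ can be made sub-geometric---slow enough that the one-parameter family of self-maps $F_1(\cdot,w_n)$ acting on the $z$-variable along the orbit does not stabilize. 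Concretely I would build $F$ near $\xi_0$ coupling the two coordinates (with neither coordinate's dynamics separately one-dimensional): the $w$-motion pushes the orbit toward $\xi_0$ slowly, while the $z$-component, together with the tangential $\Im w$-component, runs a non-autonomous iteration---realized through a Carleson-type interpolation holomorphic in the slow parameter---that alternates between two regimes and accumulates at two values; off a neighbourhood of $\xi_0$ one completes $F$ to a global holomorphic self-map of $\Omega$ by a Cartan-type correction, keeping $F(\Omega)\subset\Omega$ and free of interior fixed points. For a suitable base point the resulting orbit accumulates at two distinct points of the non-strictly-convex boundary locus over $\xi_0$, so $T(F)$ has at least two points and the Denjoy--Wolff property fails.

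The main obstacle, and the technical crux, is precisely this last construction: one must simultaneously keep $F$ mapping $\Omega$ into itself---the domain is so thin in the $w$-direction near $\xi_0$ that the downward $w$-motion, the lateral $z$- and $\Im w$-motions, and the defining inequality $\Re w+\psi(z)<0$ have to be balanced against one another with precision---and guarantee that the orbit genuinely fails to converge rather than being funnelled to a point. This amounts to showing that the interpolating symbol can be taken of modulus $<1$ on the relevant cross-sections (so $F$ is indeed $\Omega$-valued) and that the orbit remains in the good neighbourhood of $\xi_0$ while its tangential part oscillates; the latter is a Kobayashi-distance estimate exploiting the non-visibility of $\xi_0$, and it is where the real work lies.
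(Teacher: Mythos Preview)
Your domain sketch is in the right spirit---a smooth bounded convex domain, flat to infinite order along a real segment in a complex supporting hyperplane, with no analytic discs in the boundary---and this part is essentially what the paper does as well. The gap is in the construction of $F$. What you describe (a coupled two-variable dynamics assembled by ``Carleson-type interpolation'' and extended by a ``Cartan-type correction'', with oscillation controlled by non-visibility estimates) is not a construction but a programme, and you say yourself that ``it is where the real work lies''. None of the steps---that the interpolating symbol can be kept $\Omega$-valued, that the global correction preserves $F(\Omega)\subset\Omega$ and avoids fixed points, that the tangential part genuinely oscillates rather than settling---is even stated precisely, let alone proved. As written, this is not a proof.

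The paper's device avoids all of this with a one-line trick: take $F(z_1,z_2)=(h(z_1),\,g(h(z_1)))$, where $h:\Ha\to\Ha$ is a one-variable contraction toward $0$ (so $h^{\circ n}(z_1)\to 0$ non-tangentially) and $g:\Ha\to\C$ is a bounded univalent map whose non-tangential cluster set at $0$ is an interval---for instance a Riemann map onto a comb domain. Then $F^{\circ n}(z_1,z_2)=(h^{\circ n}(z_1),\,g(h^{\circ n}(z_1)))$, so the orbit lives on the graph $G=\{(\zeta,g(\zeta))\}$, and its cluster set is exactly $\{0\}\times\Gamma_N(g;0)$, a nondegenerate segment. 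The only requirement on $\Omega$ is that it contain this graph and be supported by $\{z_1=0\}$ along the segment $\{0\}\times i[-a,a]$; the infinite flatness you already built is precisely what makes room for $G$ inside $\Omega$. In short, the missing idea is that the second coordinate of $F$ should simply be a fixed univalent function of the first---the ``oscillation'' is then nothing more than the bad boundary behaviour of a one-variable Riemann map, and no interpolation, gluing, or delicate self-mapping estimates are needed.
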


 {Our construction, in fact, shows that there exist a complex affine line $L$ and a holomorphic self-map $F$ of $\Omega$ such that $\partial \Omega\cap L$ is a real segment of dimension one and $T(F)$ is a (non-trivial) real segment contained in  $\partial \Omega\cap L$}. 

In case of bounded convex domains with non-trivial analytic discs on the boundary, we can also prove the following:

\begin{theorem}\label{prop:target-set-discs}
Let $\Omega\subset\C^N$, $N\geq 2$ be a bounded convex domain with $C^{1+\varepsilon}$-smooth boundary. Suppose that $\partial \Omega$ contains a complex affine disc {$\Delta$}. Let $K$ be the principal part of any prime end of any bounded simply connected domain in $\C$. Then there exist a holomorphic self-map $F$ of $\Omega$ without fixed points such that, up to homotheties, $T(F)=K \subset \Delta$.
 \end{theorem}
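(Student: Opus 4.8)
The plan is to realize $F$ as an explicit skew contraction towards the disc $\Delta$: the transverse variable is killed geometrically, while the $\Delta$‑variable is driven by a Riemann map evaluated along the orbit of a hyperbolic automorphism that converges to the given prime end. \emph{Step 1 (normal form).} Choose a point of $\Delta$ and a supporting real hyperplane $H$ of $\Omega$ there. Applying the maximum principle to the composition of a parametrization of $\Delta$ with the complex linear functional defining $H$ shows that $\Delta$ lies in the maximal complex affine subspace $H^{\C}\subset H$. Hence, after an affine change of coordinates, we may assume $\C^{N}=\C_{z}\times\C_{w_{1}}\times\C^{N-2}_{v}$ with $\Delta=\{(z,0,0):|z|<1\}\subset\partial\Omega$ and $\Omega\subset\{\Re w_{1}<0\}$.

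\emph{Step 2 (the map).} Let $U\subset\C$ be a bounded simply connected domain, $E$ a prime end of $U$, and $\varphi\colon\D\to U$ a Riemann map; composing with a rotation of $\D$, assume $E$ corresponds to $1\in\partial\D$, so that the principal part of $E$ is the radial cluster set $K:=C_{\rho}(\varphi,1)$. Fix $r>0$ so small that $\overline{rU}\subset\{|z|<1\}$, put $\varphi_{r}:=r\varphi$, let $\tau(w_{1}):=\frac{1+w_{1}}{1-w_{1}}$ (a biholomorphism of $\{\Re w_{1}<0\}$ onto $\D$ with $\tau(0)=1$), and fix $\lambda\in(0,1)$. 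Define
\[
F(z,w_{1},v):=\lambda\,(z,w_{1},v)+(1-\lambda)\bigl(\varphi_{r}(\tau(w_{1})),\,0,\,0\bigr).
\]
Since $\Re w_{1}<0$ on $\Omega$ keeps $\tau(w_{1})\in\D$, $F$ is holomorphic on $\Omega$. Moreover $F(q)$ is the convex combination with weights $\lambda,\,1-\lambda$ of $q\in\Omega$ and the point $(\varphi_{r}(\tau(w_{1})),0,0)\in\overline{rU}\times\{0\}\subset\Delta\subset\partial\Omega$; hence $F(q)\in\Omega$, so $F$ is a self‑map of $\Omega$. It is fixed‑point free: $F(q)=q$ forces $\lambda w_{1}=w_{1}$ in the second component, i.e.\ $w_{1}=0$, impossible on $\Omega$.

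\emph{Step 3 (dynamics).} For $q_{0}=(z_{0},w_{1}^{0},v_{0})\in\Omega$ put $(z^{(n)},w_{1}^{(n)},v^{(n)}):=F^{\circ n}(q_{0})$. Then $w_{1}^{(n)}=\lambda^{n}w_{1}^{0}\to0$ and $v^{(n)}=\lambda^{n}v_{0}\to0$, while $z^{(n+1)}=\lambda z^{(n)}+(1-\lambda)f_{n}$ with $f_{n}:=\varphi_{r}\bigl(\tau(\lambda^{n}w_{1}^{0})\bigr)=\varphi_{r}\bigl(h^{\circ n}(\xi_{0})\bigr)$, where $\xi_{0}:=\tau(w_{1}^{0})\in\D$ and $h:=\tau\circ(\lambda\,\cdot\,)\circ\tau^{-1}$ is the hyperbolic automorphism of $\D$ with Denjoy--Wolff point $1$ and repelling fixed point $-1$. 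The orbit $\{h^{\circ n}(\xi_{0})\}$ leaves every compact subset of $\D$, lies on a hypercycle joining $-1$ to $1$, converges to $1$ non‑tangentially, and has consecutive points at constant Kobayashi distance; since $\varphi_{r}$ is a Kobayashi isometry onto $rU$, the $f_{n}$ leave every compact subset of $rU$ and $|f_{n+1}-f_{n}|\to0$. Hence the cluster set of $(f_{n})$ is a continuum, equal to that of $\varphi_{r}$ along that hypercycle; because a non‑tangential approach region at $1$ stays at uniformly bounded Kobayashi distance from the radius, and $\varphi_{r}$ is an isometry with both images exiting every compact subset of $rU$, this equals $C_{\rho}(\varphi_{r},1)=rK$. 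Finally, from $z^{(n)}=\lambda^{n}z_{0}+(1-\lambda)\sum_{k=0}^{n-1}\lambda^{\,n-1-k}f_{k}$ together with $|f_{n+1}-f_{n}|\to0$ one gets $z^{(n)}-f_{n-1}\to0$, so $(z^{(n)})$ also has cluster set $rK$. Thus the orbit of \emph{any} $q_{0}$ has cluster set exactly $rK\times\{(0,0)\}\subset\Delta\subset\partial\Omega$, so $T(F)=rK\times\{(0,0)\}$; identifying $\Delta$ with $\{|z|<1\}$ via the first coordinate and applying the homothety $z\mapsto z/r$ of $\C$, this is $K\subset\Delta$.

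\emph{Expected main difficulty.} The delicate point is the upper bound $T(F)\subseteq K$: one must guarantee that \emph{every} orbit --- not just a favorably placed one --- recovers all of $K$ and nothing more, whatever the direction of approach of $h^{\circ n}(\xi_{0})$ to the prime end. This is where the planar prime‑end theory is needed, via the two facts used above: the cluster set of a Riemann map along an arbitrary non‑tangential arc at $1$ equals the radial cluster set (the Kobayashi‑distance comparison), and the radial cluster set equals the principal part of the prime end (classical; see \cite{BB} and the references therein for this planar realization). The remaining ingredients --- the normal form, the self‑map property, the absence of fixed points, and the weighted‑average estimate $z^{(n)}-f_{n-1}\to0$ --- are elementary.
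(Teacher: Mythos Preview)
Your argument is correct and genuinely different from the paper's. The paper does not use a convex combination at all: it invokes Lemma~\ref{lem:main}, whose map has the form $F(z)=(h(z_1),\,g(h(z_1)),\,0'')$ and collapses $\Omega$ in one step onto the graph $G(V,g)$ of the Riemann map over a slice $V$. To make this work, the paper needs the slice $D=\Omega\cap\{z'=0\}$ to have $C^{1+\varepsilon}$ boundary at $0$ so that a Riemann map $\varphi:\Ha\to V$ extends $C^1$ at $0$ (Kellogg--Warschawski), which in turn produces a self-map $h:\Ha\to V$ with $h'(0)<1$ and hence non-tangentially convergent iterates. Your map $F(q)=\lambda q+(1-\lambda)(\varphi_r(\tau(w_1)),0,0)$ bypasses all of this: the self-map property comes for free from convexity (a proper convex combination of a point of $\Omega$ with a point of $\overline\Omega$ lies in $\Omega$), and the transverse dynamics is the linear contraction $w_1\mapsto\lambda w_1$, so the conjugated disc map $h=\tau\circ(\lambda\,\cdot)\circ\tau^{-1}$ is a genuine hyperbolic automorphism with no regularity needed. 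In particular your proof does not use the $C^{1+\varepsilon}$ hypothesis at all, so it actually yields the stronger statement for arbitrary bounded convex domains containing an affine disc in the boundary. On the other hand, the paper's Lemma~\ref{lem:main} is the common engine behind both Theorem~\ref{prop:target-set-discs} and Theorem~\ref{cor:existence-nonDW}; your convex-combination trick relies essentially on having the disc $\Delta\subset\partial\Omega$ to land on, and does not adapt to the disc-free situation of Theorem~\ref{cor:existence-nonDW}.

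Two minor remarks on presentation. First, your passage ``the cluster set of $(f_n)$ is a continuum, equal to that of $\varphi_r$ along that hypercycle'' is a detour: the identification you want is exactly Proposition~\ref{Prop:Lehto-NT-simpl} (equivalently Corollary~\ref{Cor:non-tg-inH}) applied to $\varphi_r$ and the sequence $\{h^{\circ n}(\xi_0)\}$, which converges non-tangentially to $1$ with constant hyperbolic step; this gives directly $\Gamma(\varphi_r;\{h^{\circ n}(\xi_0)\})=\Pi[\hat{\varphi_r}(\underline{1})]=rK$, and you need not compare with the full hypercycle cluster set. Second, the weighted-average step $z^{(n)}-f_{n-1}\to 0$ is correct but deserves one line: split $\sum_{j=0}^{n-1}\lambda^j(f_{n-1-j}-f_{n-1})$ at a fixed $M$, bound the tail $j>M$ by $2\|\varphi_r\|_\infty\,\lambda^{M+1}/(1-\lambda)$, and bound the head using $|f_{n-1-j}-f_{n-1}|\le\sum_{i=n-1-j}^{n-2}|f_{i+1}-f_i|\le M\max_{i\ge n-1-M}|f_{i+1}-f_i|\to 0$.
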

 
In particular, $T(F)$ can be as ``bad'' as the principal part of any bounded simply connected domain, for instance, it can have non-integer Hausdorff dimension. 
 
{If $D\subset \C^N$ is a bounded convex domain and $\partial D$ contains a non-trivial analytic disc, then convexity implies that $\partial D$ also contains a complex affine disc. Therefore}, it follows from the previous theorem that if $\Omega\subset \C^N$, $N\geq 2$, is  a bounded convex domain with $C^{1+\varepsilon}$-smooth boundary for some $\varepsilon>0$ and satisfies the Denjoy-Wolff property,  then $\partial \Omega$ does not contain non-trivial analytic discs. 

Existence of  non-trivial analytic discs in the boundary of convex domains prevents visibility. Also, in \cite[Section 5]{BNT} and \cite[Section 5]{Okt} there are examples of smooth bounded convex domains with no non-trivial analytic discs on the boundary such that the order of contact of the boundary  with a complex affine  line  is ``very high'' and do not enjoy visibility. These similitudes between the {known} conditions for a bounded convex domain to be visible and for satisfying the Denjoy-Wolff property suggest the following conjecture, which is open even in the smooth case:

\smallskip

{\bf Conjecture:} {\sl Let $\Omega\subset \C^N$ be a bounded convex domain, $N\geq 2$. Then $\Omega$ is visible if and only if it satisfies the Denjoy-Wolff property}.

\smallskip

The proofs of our results are based on the following construction. Let $\Ha:=\{\zeta\in \C: \Re \zeta>0\}$. Let $V\subset \Ha$ be  a convex domain such that $0\in \partial V$ and $\partial V$ is $C^{1+\varepsilon}$ at $0$ for some $\varepsilon>0$. Let $\Psi:(0,\delta)\to (0,+\infty)$ be a convex strictly increasing function such that $\lim_{x\to 0^+} \Psi(x)=0$. The {\sl wedge} $W(V, \Psi,\delta)$ is defined as
\[
W(V,\Psi, \delta):=\{(z_1,z_2)\in\C^2: \|(z_1, z_2)\|<\delta, z_1 \in V, \Re z_2>0, \Re z_1>\Psi(\Re z_2)\}.  
\]
The set $W(V, \Psi,\delta)$ is a bounded convex domain in $\C^2$ and $H:=\{(z_1, z_2)\in \C^2:  z_1=0\}$ is a complex supporting hyperplane for $W(V, \Psi,\delta)$ such that 
\[
I_{V, \Psi,\delta}:=\partial W(V, \Psi, \delta)\cap H=\{(0, it):  t\in [-\delta,\delta]\}.
\]
    {For $z=(z_1,\ldots, z_N)\in \C^N$, $N\geq 2$, we denote by $z''=(z_3,\ldots, z_N)$}. We have:
\begin{theorem}\label{thm:main}
{For every $\delta>0$ and for every $V\subset \Ha$  convex domain such that $0\in \partial V$ and $\partial V$ is $C^{1+\varepsilon}$-smooth at $0$ for some $\varepsilon>0$, there exist $\delta'\in(0,\delta)$ and a  convex strictly increasing function  $\Psi_0:(0,+\infty)\to (0,+\infty)$ such that $\lim_{x\to 0^+} \Psi_0(x)=0$, with the following property. If $\Omega\subset \C^N$, $N\geq 2$, is any  domain so that  $W(V, \Psi_0, \delta)\times\{0''\}\subset \Omega \subset \{z\in \mathbb{C}^N: \Re z_1> 0\}$ and $I_{V,\Psi_0,\delta}\times\{0''\}\subset \partial\Omega$ then there exists a holomorphic self-map $F$ of $\Omega$ without fixed points in $\Omega$ so that $T(F)=\{(0, it):  t\in [-\delta',\delta']\}$. In particular $\Omega$ does not satisfy the Denjoy-Wolff property.}
\end{theorem}

{Note that in the previous result, $\Omega$ is not assumed to be bounded, nor convex---although a certain ``weak convexity'' is assumed at the origin. Moreover, no regularity conditions are assumed on $\partial \Omega$. Clearly, the statement is invariant under biholomorphisms in the following sense: if $D\subset \C^N$, $N\geq 2$, is a domain such that there exists a domain $\Omega$ as in the hypotheses of Theorem~\ref{thm:main} and a biholomorphism $\Theta:\Omega\to D$ which extends continuously to $\partial \Omega$ in a neighboorhood of the origin, then $D$ does not satisfy the Denjoy-Wolff property.} 

{The proof of the previous theorem relies on a two-dimensional construction  based on pathological behaviour of one dimensional Riemann maps.  This argument  needs the following  generalisation of a theorem of Carath\'eodory which might be interesting in its own}.

\begin{proposition}\label{Prop:Lehto-NT-simpl}
Let $g:\D\to \C$ be univalent. Let $\sigma\in \partial \D$ and let $\{w_n\}\subset\D$ be a sequence converging non-tangentially to $\sigma$ such that there exists $C>0$ so that $k_\D(w_n,w_{n+1})\leq C$ for all $n$. Then
\[
\Pi[\hat{g}(\underline{\sigma})]=\Gamma(g;\{w_n\}),
\]
where $\Pi[\hat{g}(\underline{\sigma})]$ is the principal part of the prime end of $g(\D)$ corresponding to $\sigma$ and $\Gamma(g;\{w_n\})$ denotes the cluster set of $\{g(w_n)\}$.
\end{proposition}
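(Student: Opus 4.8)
The plan is to reduce the statement to the classical Carath\'eodory theorem on boundary behaviour of conformal maps together with the Lehto--Virtanen theorem on non-tangential limits, using the hypothesis on the Kobayashi distance to control how the sequence $\{w_n\}$ sits inside $\D$. First I would recall the relevant dictionary: since $g$ is univalent, $g(\D)$ is a simply connected domain and $g$ extends to a homeomorphism between $\ov{\D}$ (with the prime-end topology at the boundary, \emph{i.e.}\ the Carath\'eodory compactification) and the prime-end closure of $g(\D)$. Under this correspondence, the prime end $\hat g(\underline{\sigma})$ attached to $\sigma\in\partial\D$ has a principal part $\Pi[\hat g(\underline{\sigma})]$, which by Carath\'eodory's theory coincides with the intersection over all crosscut neighbourhoods; equivalently, it equals the cluster set of $g$ along \emph{any} Jordan arc in $\D$ ending at $\sigma$ that stays eventually inside every such crosscut neighbourhood.

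The heart of the argument is therefore to show that the discrete orbit $\{w_n\}$ behaves, for the purpose of cluster sets, like such an arc. Here the two hypotheses enter. Non-tangential convergence $w_n\to\sigma$ guarantees that $\{w_n\}$ eventually lies in any Stolz angle at $\sigma$; by a standard Lehto--Virtanen type estimate, a Stolz angle is mapped by $g$ into sets whose cluster behaviour is governed by the principal part, so $\Gamma(g;\{w_n\})\subseteq \Pi[\hat g(\underline{\sigma})]$ follows from the fact that the principal cluster set contains the non-tangential cluster set. For the reverse inclusion I would use the Kobayashi-distance condition $k_\D(w_n,w_{n+1})\le C$: this says consecutive points are a bounded hyperbolic distance apart, hence the hyperbolic geodesic segments $[w_n,w_{n+1}]$ have uniformly bounded hyperbolic length and, concatenated, form a curve $\gamma$ in $\D$ landing at $\sigma$ with the property that its Euclidean ``gaps'' shrink at a controlled rate. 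Concretely, $k_\D(w_n,w_{n+1})\le C$ forces $|w_n-w_{n+1}|\le (e^{2C}-1)\,\mathrm{dist}(w_n,\partial\D)\lesssim 1-|w_n|$, so the piecewise-geodesic curve $\gamma$ stays in a fixed Stolz angle (after enlarging the aperture) and is ``non-tangentially dense'' along itself near $\sigma$. One then argues that the cluster set of $g$ along this curve $\gamma$ is exactly $\Pi[\hat g(\underline{\sigma})]$ — this is where one invokes that the principal part of a prime end is precisely the cluster set along curves converging to $\sigma$ inside every crosscut neighbourhood, and checks that $\gamma$ qualifies because its hyperbolic-bounded increments prevent it from being ``pushed out'' of any crosscut chain defining the prime end. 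Finally, $\Gamma(g;\{w_n\})=\Gamma(g;\gamma)$ because the extra points of $\gamma$ between $w_n$ and $w_{n+1}$ lie in a hyperbolic ball of radius $C$ about $w_n$, whose $g$-image has Euclidean diameter tending to $0$ as $n\to\infty$ (univalence plus the Koebe distortion estimate on hyperbolic balls), so inserting or deleting them does not change the cluster set.

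The main obstacle I anticipate is the reverse inclusion $\Pi[\hat g(\underline{\sigma})]\subseteq \Gamma(g;\{w_n\})$: a priori a sparse sequence could ``miss'' part of the principal part even while converging non-tangentially, and ruling this out is exactly what the hypothesis $k_\D(w_n,w_{n+1})\le C$ is for. The delicate point is to make precise that a curve built from bounded-hyperbolic-length pieces cannot be separated from $\sigma$ by the crosscuts defining the prime end — equivalently, that such a curve realises the \emph{full} principal part and not merely a sub-cluster-set. I would handle this by the crosscut characterisation of the principal part: given any crosscut $C_k$ of $g(\D)$ shrinking to a point of $\Pi[\hat g(\underline{\sigma})]$, its preimage $g^{-1}(C_k)$ is a crosscut of $\D$ separating $\sigma$ from a fixed compact set, and the bounded hyperbolic increments force $\gamma$ (hence infinitely many $w_n$) to cross every such $g^{-1}(C_k)$; passing to the limit yields every point of the principal part as a subsequential limit of $\{g(w_n)\}$. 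Assembling the two inclusions gives the asserted equality.
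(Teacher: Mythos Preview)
Your proposal is essentially correct, and the key mechanism---the bounded hyperbolic step hypothesis ensures the orbit cannot ``skip over'' any principal point, combined with a distance/Koebe argument to transfer limits between hyperbolically close points---is the same as in the paper. The execution, however, is genuinely different. The paper quotes Carath\'eodory's identity $\Pi[\hat g(\underline\sigma)]=\Gamma_R(g;\sigma)=\Gamma_N(g;\sigma)$ and then works \emph{entirely on the radius}: every non-tangential point $\zeta_n$ (resp.\ $w_n$) is within bounded $k_\D$-distance of a radial point $r_n$ (resp.\ $s_n$); the hypothesis $k_\D(w_n,w_{n+1})\le C$ forces $k_\D(s_n,s_{n+1})\le 2C_1+C$, so the $\{s_n\}$ are ``Kobayashi-dense'' on the radius and each $r_n$ lies within bounded hyperbolic distance of some $s_{m_n}$; the Distance Lemma then pushes the limit from $g(\zeta_n)$ to $g(w_{m_n})$. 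No curve is built and no crosscuts are touched beyond the citation.

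Your route instead interpolates by the piecewise-geodesic curve $\gamma$ and argues topologically: any null-chain $(C_k)$ realising a principal point $p$ pulls back to crosscuts $g^{-1}(C_k)$ that $\gamma$, being continuous and landing at $\sigma$, must cross, producing $g(\gamma(t_k))\in C_k\to p$; the bounded segment lengths then let you replace $\gamma(t_k)$ by a nearby $w_{n_k}$. This is sound, and it has the virtue of explaining \emph{why} the principal part (rather than the full impression) appears. Two small remarks: the intermediate claim ``$\Gamma(g;\gamma)=\Pi$'' is not automatic for an arbitrary non-tangential curve (one can have strict inclusion), so the real work is done by your crosscut-crossing paragraph, which bypasses that claim and should be promoted to the main argument; and the phrase ``the bounded hyperbolic increments force $\gamma$ to cross every $g^{-1}(C_k)$'' is slightly misplaced---the crossing comes from continuity and landing at $\sigma$, while the bounded increments are what let you pass from the crossing point back to some $w_{n_k}$. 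The paper's radial-projection argument is a bit shorter and avoids the crosscut machinery; yours is more geometric and arguably more transparent about the role of prime ends.
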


The paper is organized as follows. In Section~\ref{Sec:uno} we prove Proposition~\ref{Prop:Lehto-NT-simpl} and some preliminary results. In Section~\ref{Sec:3} we prove Theorem~\ref{thm:main} and, as corollary, Theorem~\ref{cor:existence-nonDW}. In Section~\ref{Sec:4} we give the proof of Theorem~\ref{prop:target-set-discs} and construct some explicit examples.

\medskip

The authors sincerely thank the referee for their useful comments and suggestions, which improved the original manuscript.

\section{Preliminary results}\label{Sec:uno}

Let $\Omega$ be a domain in $\mathbb C^n$. Recall that the Kobayashi (pseudo)distance $k_\Omega$ is the inner distance associated to the Kobayashi-Royden pseudometric 
$$ \kappa_\Omega(z;v):=\inf_{\lambda \in \mathbb C}\{|\lambda|: \exists f \in \mathcal{O}(\D,\Omega) \:\:\: \text{such that} \:\:\: f(0)=z, f'(0)\lambda=v\}.$$ The important property of the Kobayashi distance is that it is contracted by holomorphic maps, in particular it is invariant under biholomorphisms. For further details we refer the reader to the standard book \cite{JP}.

Throughout this note, we set $\Ha:=\{\zeta\in\C: \Re \zeta>0\}$ to be the right half plane, while $\C_\infty=\C\cup \{\infty\}$ denotes the Riemann sphere.

The classical Leht\"o-Virtanen's theorem {(see, {\sl e.g.}, \cite[Theorem 3.3.1]{BCDM})} allows to get information on the non-tangential limit of a univalent map of $\D$ by knowing the limit along a curve. Such a result has been generalized   for sequences in \cite[Lemma 5.4]{BB}  (based on an argument from \cite[Theorem 1.5]{AB}). In the following we need a more refined version of \cite[Lemma 5.4]{BB}. As a matter of notations, if $g:\D\to \C$ is a holomorphic map, and $\sigma\in\partial\D$, let
\begin{equation*}
\begin{split}
\Gamma_{R}(g;\sigma):=&\{p\in \C_\infty: \hbox{there exists a sequence $\{r_n\}\subset(0,1)$  converging  to $1$}\\&\hbox{ so that} \lim_{n\to \infty}g(r_n\sigma)=p\}.
\end{split}
\end{equation*}
In other words, $\Gamma_{R}(g;\sigma)$ is the radial cluster set of $g$ at $\sigma$.  Also, we let
\begin{equation*}
\begin{split}
\Gamma_{N}(g;\sigma):=&\{p\in \C_\infty: \hbox{there exists a sequence $\{\zeta_n\}\subset\D$  converging non-tangentially to $\sigma$}\\&\hbox{ so that} \lim_{n\to \infty}g(z_n)=p\}.
\end{split}
\end{equation*}
That is, $\Gamma_{N}(g;\sigma)$ is the non-tangential cluster set of $g$ at $\sigma$. Moreover, if $\{w_n\}\subset\D$ is a sequence converging to $\sigma$, we let 
\[
\Gamma(g;\{w_n\}):=\{p\in \C_\infty: \hbox{there exists a subsequence $\{w_{n_k}\}\subset\D$ so that} \lim_{k\to \infty}g(w_{n_k})=p\},
\]
that is, $\Gamma(g;\{w_n\})$ is the cluster set of the sequence $\{g(w_n)\}$. 

We also need to recall the concept of  principal part of a prime end of a simply connected domain in $\C$ (see, {\sl e.g.}, \cite[Chapter~4]{BCDM}). Let $\Omega\subsetneq \C$ be a simply connected domain and let $g:\D\to \Omega$ be a Riemann map. The map $g$ defines  a one-to-one correspondence between the points of $\partial\D$ and the prime ends of $\Omega$, which are defined in terms of equivalence classes of null-chains $(C_n)$. If $\sigma\in \partial \D$, we denote by $\hat{g}(\underline{\sigma})$ the corresponding prime end in $\Omega$. Hence, the {\sl principal part} of $\hat{g}(\underline{\sigma})$ is defined as
\begin{equation*}
\begin{split}
 \Pi[\hat{g}(\underline{\sigma})]:=&\{\zeta\in\C_\infty:\hbox{there exist a null-chain $(C_n)$ representing $\hat{g}(\underline{\sigma})$}\\& \hbox{and a sequence $\{\zeta_n\}$ such that } \zeta_n\in C_n \hbox{ and} \lim_{n\to \infty}\zeta_n=\zeta\}. 
\end{split}
\end{equation*}

\begin{proposition}\label{Prop:Lehto-NT}
Let $g:\D\to \C$ be univalent. Let $\sigma\in \partial \D$ and let $\{w_n\}\subset\D$ be a sequence converging non-tangentially to $\sigma$ such that there exists $C>0$ so that $k_\D(w_n,w_{n+1})\leq C$ for all $n$. Then
\[
\Pi[\hat{g}(\underline{\sigma})]=\Gamma_{R}(g;\sigma)=\Gamma_{N}(g;\sigma)=\Gamma(g;\{w_n\}).
\]
\end{proposition}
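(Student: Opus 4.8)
The plan is to establish the chain of equalities
\[
\Pi[\hat{g}(\underline{\sigma})]=\Gamma_{R}(g;\sigma)=\Gamma_{N}(g;\sigma)=\Gamma(g;\{w_n\})
\]
by proving the inclusions
\[
\Pi[\hat{g}(\underline{\sigma})]\subseteq\Gamma_{R}(g;\sigma)\subseteq\Gamma_{N}(g;\sigma)\subseteq\Gamma(g;\{w_n\})\subseteq\Pi[\hat{g}(\underline{\sigma})].
\]
Some of these are easy. The inclusion $\Gamma_{R}(g;\sigma)\subseteq\Gamma_{N}(g;\sigma)$ is immediate since a radial sequence is in particular a non-tangential sequence. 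For the first inclusion, $\Pi[\hat{g}(\underline{\sigma})]\subseteq\Gamma_{R}(g;\sigma)$: this is a known feature of the principal part of a prime end (see \cite[Chapter 4]{BCDM}); the radial segment $[0,\sigma)$ lands inside every null-chain from a certain index on (because $g^{-1}$ of a null-chain separates $\sigma$ from $0$ eventually), so the radial cluster set fills out the principal part. I would invoke the standard theory here rather than reprove it.

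The genuinely new content is the last two inclusions, and these are exactly where the hypotheses on $\{w_n\}$ (non-tangential convergence plus the uniform Kobayashi-step bound $k_{\D}(w_n,w_{n+1})\le C$) enter. For $\Gamma(g;\{w_n\})\subseteq\Pi[\hat{g}(\underline{\sigma})]$: given $p$ a subsequential limit of $\{g(w_{n_k})\}$, I want to build a null-chain representing $\hat{g}(\underline\sigma)$ together with points on it converging to $p$. The idea is to join consecutive $w_n$'s by Kobayashi (hyperbolic) geodesic segments $\gamma_n$; the bound $k_{\D}(w_n,w_{n+1})\le C$ means each $\gamma_n$ stays in a bounded-Kobayashi-diameter piece, and since the $w_n$ converge non-tangentially to $\sigma$, the whole concatenated curve $\bigcup_n\gamma_n$ converges non-tangentially to $\sigma$ (this is the role of the step bound: it prevents the connecting curve from wandering tangentially or toward the interior). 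Then the image $g(\bigcup_n\gamma_n)$ is a curve in $\Omega=g(\D)$ terminating at the prime end $\hat{g}(\underline\sigma)$, and since $p$ is a limit of points on this curve, $p\in\overline{g(\bigcup_{n\ge m}\gamma_n)}$ for every $m$; the nested intersection of these closures is (contained in) the impression of the prime end, and — using that the curve lands at the prime end — $p$ actually lies in the principal part. This is the step that adapts \cite[Lemma 5.4]{BB} and \cite[Theorem 1.5]{AB}, and I would cite the curve-to-prime-end machinery (Lindel\"of-type arguments, Koebe distortion to control $g$ on hyperbolic balls) to make this rigorous.

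For the remaining inclusion $\Gamma_{N}(g;\sigma)\subseteq\Gamma(g;\{w_n\})$: let $p\in\Gamma_N(g;\sigma)$, so $g(\zeta_m)\to p$ along some non-tangential sequence $\zeta_m\to\sigma$. The strategy is to compare $\{\zeta_m\}$ with the fixed sequence $\{w_n\}$: because both converge non-tangentially to $\sigma$, one can find for each $m$ an index $n=n(m)$ with $k_{\D}(\zeta_m,w_{n(m)})$ bounded (non-tangential approach regions are quasi-isometric to the half-plane geometry, so two non-tangential sequences to the same point stay Kobayashi-close after reindexing). By the step bound, the discrete path $w_{n(m)},w_{n(m)+1},\ldots$ fills a connected hyperbolic region, and one argues that $g(w_{n(m)})$ stays within a bounded Euclidean distance of $g(\zeta_m)$ — again via the Koebe distortion theorem applied on a Kobayashi ball of bounded radius, the point being that $\mathrm{diam}_{\mathrm{eucl}}\,g(B_{k_\D}(\zeta,R))$ is comparable to $|g'(\zeta)|(1-|\zeta|^2)$ and hence to $\mathrm{dist}(g(\zeta),\partial\Omega)$. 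Passing to a suitable subsequence of the $w_n$'s then yields a subsequence of $\{g(w_n)\}$ converging to $p$, so $p\in\Gamma(g;\{w_n\})$.

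The main obstacle, I expect, is making precise the claim that the concatenated geodesic curve through the $w_n$ converges non-tangentially to $\sigma$ and ``lands'' at the prime end in the strong sense needed to conclude membership in the principal part (as opposed to just the impression). Equivalently, the delicate point is the quantitative interplay between the Kobayashi-step bound and the geometry of prime ends: the step bound must be leveraged to ensure the connecting arcs do not oscillate across a null-chain too many times. The cleanest route is probably to reduce everything, via Koebe distortion, to statements about the pullback curve in $\D$ and then invoke the already-established equality $\Pi[\hat g(\underline\sigma)]=\Gamma_R(g;\sigma)=\Gamma_N(g;\sigma)$ from the one-variable theory together with the sequential refinement in \cite[Lemma 5.4]{BB}; the new ingredient over that lemma is precisely Proposition~\ref{Prop:Lehto-NT-simpl}'s assertion that the \emph{sequential} cluster set $\Gamma(g;\{w_n\})$ already equals the full principal part, which is what the uniform step bound buys us.
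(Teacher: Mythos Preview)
Your chain of inclusions is the right shape, but two points deserve correction.

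First, you work far too hard on $\Gamma(g;\{w_n\})\subseteq\Pi[\hat{g}(\underline{\sigma})]$. Since $\{w_n\}$ converges non-tangentially to $\sigma$, the inclusion $\Gamma(g;\{w_n\})\subseteq\Gamma_N(g;\sigma)$ is immediate, and $\Gamma_N(g;\sigma)=\Pi[\hat{g}(\underline{\sigma})]$ is Carath\'eodory's theorem, which you already invoke. The null-chain construction through concatenated geodesic arcs that you outline---and flag as ``the main obstacle''---is simply unnecessary; the paper dispatches this direction in one sentence. The only nontrivial inclusion is $\Gamma_N(g;\sigma)\subseteq\Gamma(g;\{w_n\})$.

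Second, your justification for the matching step in that inclusion is wrong as stated. You assert that because both $\{\zeta_m\}$ and $\{w_n\}$ converge non-tangentially, one can find $n(m)$ with $k_\D(\zeta_m,w_{n(m)})$ bounded, attributing this to non-tangential approach regions being ``quasi-isometric to the half-plane geometry''. Non-tangentiality alone does \emph{not} give this: take $w_n=(1-2^{-2^n})\sigma$ and $\zeta_m=(1-2^{-m})\sigma$, both radial; for most $m$ there is no $n$ with $k_\D(\zeta_m,w_n)$ bounded. The step bound on $\{w_n\}$ is exactly what makes the matching work, and the paper's argument uses it in the right place: project $\zeta_m$ and $w_n$ onto the radial geodesic $[0,\sigma)$, obtaining $r_m$ and $s_n$ respectively, each within bounded Kobayashi distance of the original point by non-tangentiality. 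The step bound then yields $k_\D(s_n,s_{n+1})\le 2C_1+C$, so the $s_n$ have bounded gaps along the geodesic; hence any $r_m$ beyond $s_1$ lies in some interval $[s_{m_n},s_{m_n+1}]$ and is therefore within $2C_1+C$ of $s_{m_n}$. The triangle inequality then gives the uniform bound on $k_\D(\zeta_m,w_{m_n})$ you need. For the final step---bounded Kobayashi distance in $\D$ forces $g(\zeta_m)$ and $g(w_{m_n})$ to share the same limit---the paper invokes the ``Distance Lemma'' \cite[Theorem~5.3.1]{BCDM}; your Koebe-distortion reasoning is an equivalent route, so that part is fine.
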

\begin{proof}
It is a result due to Carath\'eodory that $\Pi[\hat{g}(\underline{\sigma})]=\Gamma_{R}(g;\sigma)=\Gamma_{N}(g;\sigma)$ (see, {\sl e.g.}, \cite[Theorem~4.4.9]{BCDM}). Actually, the equality $\Gamma_{R}(g;\sigma)=\Gamma_{N}(g;\sigma)$  can be inferred directly by our proof. Clearly $\Gamma(g;\{w_n\})\subseteq \Gamma_{N}(g;\sigma)$. Thus we have to prove the converse. Without loss of generality, conjugating with a rotation if necessary, we can assume $\sigma=1$. Let $p\in \Gamma_{N}(g;1)$ and let $\{\zeta_n\}$ be a sequence converging non-tangentially to $1$ so that $\lim_{n\to\infty}g(\zeta_n)=p$. This means (see, {\sl e.g.}, \cite[Corollary~6.2.6]{BCDM}) that there is $C_0>0$ such that for every $n\in\mathbb N$ there exists $r_n\in (0,1)$ so that
\[
k_\D(\zeta_n, r_n)\leq C_0.
\]
Note that, since $\{\zeta_n\}$ converges to $1$ this implies that $\{r_n\}$ converges to $1$ as well.

Let $\Omega:=g(\D)$. Since 
\[
k_\Omega(g(\zeta_n), g(r_n))=k_\D(\zeta_n, r_n)\leq C_0,
\]
it follows by the ``Distance Lemma'' (see, {\sl e.g.}, \cite[Theorem~5.3.1]{BCDM}) that $\lim_{n\to\infty} g(r_n)=p$ (note that in particular this implies directly  $\Gamma_{R}(g;\sigma)=\Gamma_{N}(g;\sigma)$). 

Since $\{w_n\}$ converges non-tangentially to $1$, by the same token as before, {there exists $C_1>0$ such that, up to passing to a subsequence if necessary, for $n\in \mathbb N$ there exists a strictly increasing sequence $s_n\in (0,1)$ converging to $1$ so that} 
\[
k_\D(w_n, s_n)\leq C_1.
\] 
We claim that there exist $C_2>0$ and $n_0\in \mathbb N$ such that for every $n\geq n_0$ there exists $m_n\in \mathbb N$ so that
\begin{equation}\label{Eq:stima-close-geo}
k_\D(r_n, s_{m_n})\leq C_2.
\end{equation}
In particular, since $\{r_n\}$ converges to $1$ we have $\lim_{n\to \infty} s_{m_n}=1$. 
Assuming the claim for the moment, we have for $n\geq n_0$
\begin{equation*}
\begin{split}
k_\Omega(g(\zeta_n), g(w_{m_n}))&=k_\D(\zeta_n, w_{m_n})\leq k_\D(\zeta_n, r_n)+k_\D(r_n, s_{m_n})+k_\D(s_{m_n}, w_{m_n})\\& \leq C_0+C_2+C_1.
\end{split}
\end{equation*}
Therefore, by the Distance Lemma, 
\[
\lim_{n\to \infty}g(w_{m_n})=\lim_{n\to \infty}g(\zeta_n)=p,
\]
hence $p\in \Gamma(g;\{w_n\})$, that is, $\Gamma_{N}(g;\sigma)\subseteq \Gamma(g;\{w_n\})$

In order to prove \eqref{Eq:stima-close-geo}, note that, for every $n\in \mathbb N$,
\[
k_\D(s_n,s_{n+1})\leq k_\D(s_n,w_n)+k_\D(s_{n+1},w_{n+1})+k_\D(w_n,w_{n+1})\leq 2C_1+C=:C_2.
\]
Let {$n_0\in\mathbb N$} be such that $r_n\geq s_1$ for all $n\geq n_0$. For every $n$ there exists $m_n\in \mathbb N$ such that $r_n\in [s_{m_n}, s_{m_n+1})$. Since $[s_{m_n}, s_{m_n+1}]$ is a geodesic for $k_\D$ it follows that
\[
k_D(s_{m_n}, r_n)\leq k_D(s_{m_n}, s_{m_n+1})\leq C_2,
\]
and \eqref{Eq:stima-close-geo}.  
\end{proof}

In the sequel we will mainly use the previous result for univalent functions on $\Ha$. Using the Cayley transform from $\D$ to $\Ha$ one immediately has from Proposition~\ref{Prop:Lehto-NT}:

\begin{corollary}\label{Cor:non-tg-inH}
Let $g:\Ha \to \C$ be a univalent function. Let $\{w_n\}\subset\Ha$ be a sequence converging non-tangentially to $0$ such that there exists $C>0$ so that $k_\Ha(w_n,w_{n+1})\leq C$ for all $n$. Then
\[
\Gamma_{R}(g;0)=\Gamma_{N}(g;0)=\Gamma(g;\{w_n\}),
\]
where $\Gamma_{R}(g;0)$ is the cluster set of $\{g(r)\}_{r>0}$ at $0$, $\Gamma_{N}(g;0)$ is the non-tangential cluster set of $g$ at $0$ and $\Gamma(g;\{w_n\})$ is the cluster set of $\{g(w_n)\}$ for $n\to \infty$.
\end{corollary}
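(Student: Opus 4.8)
The plan is to obtain Corollary~\ref{Cor:non-tg-inH} by transporting Proposition~\ref{Prop:Lehto-NT} from $\D$ to $\Ha$ through a fixed Cayley transform. I would fix the M\"obius map $\mathcal{C}\colon\D\to\Ha$, $\mathcal{C}(\z)=\frac{1-\z}{1+\z}$; it is a biholomorphism with $\mathcal{C}(1)=0$, and it sends the radius $(0,1)$ of $\D$ at the point $1$ onto the interval $(0,1)\subset\R_{>0}$, a half-line accumulating at $0$ inside $\Ha$. Setting $\tilde g:=g\circ\mathcal{C}\colon\D\to\C$, the map $\tilde g$ is univalent because $g$ is univalent and $\mathcal{C}$ is biholomorphic, so Proposition~\ref{Prop:Lehto-NT} applies to $\tilde g$ at the boundary point $\sigma=1$.

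Next I would transfer the hypotheses on $\{w_n\}$. Put $\tilde w_n:=\mathcal{C}^{-1}(w_n)\subset\D$. Since $\mathcal{C}$ extends to a conformal map with nonvanishing derivative near $1$ and maps $1$ to $0$, it carries non-tangential approach regions at $1$ onto non-tangential approach regions at $0$; hence $\{\tilde w_n\}$ converges non-tangentially to $1$. Because $\mathcal{C}$ is a biholomorphism it is a $k$-isometry, so $k_\D(\tilde w_n,\tilde w_{n+1})=k_\Ha(w_n,w_{n+1})\le C$ for all $n$. Thus $\{\tilde w_n\}$ satisfies the hypotheses of Proposition~\ref{Prop:Lehto-NT}, and we get
\[
\Gamma_{R}(\tilde g;1)=\Gamma_{N}(\tilde g;1)=\Gamma(\tilde g;\{\tilde w_n\}).
\]

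It then remains to identify these three sets with the ones in the corollary. From $\tilde g(\tilde w_n)=g(w_n)$ we get $\Gamma(\tilde g;\{\tilde w_n\})=\Gamma(g;\{w_n\})$. A sequence in $\D$ converges non-tangentially to $1$ if and only if its image under $\mathcal{C}$ converges non-tangentially to $0$ (again by conformality of $\mathcal{C}$ at $1$), and $\tilde g=g\circ\mathcal{C}$ along it; hence $\Gamma_{N}(\tilde g;1)=\Gamma_{N}(g;0)$. Finally, $\mathcal{C}$ maps the radius $\{r\cdot 1:r\in(0,1)\}$ onto the half-line $(0,1)\subset\R_{>0}$ tending to $0$, while $\mathcal{C}^{-1}$ maps the tail of $\R_{>0}$ near $0$ into $(0,1)$ near $1$; since cluster sets depend only on tails, $\Gamma_{R}(\tilde g;1)=\Gamma_{R}(g;0)$ in the sense of the corollary. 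Combining the last display with these identifications gives $\Gamma_{R}(g;0)=\Gamma_{N}(g;0)=\Gamma(g;\{w_n\})$.

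There is no genuinely hard step here; the only point deserving care is the classical fact that a conformal map which is $C^1$ up to the boundary with nonvanishing derivative intertwines non-tangential approach regions (and carries radial segments essentially to radial segments), which underlies all the identifications above. Apart from that, everything reduces to the biholomorphic invariance of the Kobayashi distance and the obvious naturality of the cluster-set notions under precomposition with $\mathcal{C}$.
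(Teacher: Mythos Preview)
Your proposal is correct and follows exactly the route indicated in the paper, which simply states that the corollary is obtained from Proposition~\ref{Prop:Lehto-NT} by transporting via the Cayley transform from $\D$ to $\Ha$. You have merely spelled out the details (biholomorphic invariance of $k$, conformal preservation of non-tangential regions, and the identification of cluster sets under precomposition with $\mathcal{C}$) that the paper leaves implicit.
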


In the following we also need the following:
\begin{lemma}\label{lem:JWC}
Let $h:\Ha \to \Ha$ be holomorphic. Assume that $h$ extends $C^1$ at $0$ and $h(0)=0$. Then $h'(0)\in (0,+\infty)$. Moreover, if $h'(0)<1$ then $\{h^{\circ m}(w)\}$ converges non-tangentially to $0$. 
\end{lemma}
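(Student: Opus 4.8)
\textbf{Proof proposal for Lemma~\ref{lem:JWC}.}

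The plan is to reduce everything to the classical Julia--Wolff--Carath\'eodory theory on the half-plane, exploiting the $C^1$-regularity at $0$ to identify the derivative with the boundary dilatation coefficient. First I would record that since $h$ maps $\Ha$ into $\Ha$ and extends $C^1$ at the boundary point $0$ with $h(0)=0$, the restriction of $h$ to the positive real axis near $0$ takes values in $\overline\Ha$; writing $h(x) = h'(0)\,x + o(x)$ for real $x\to 0^+$ along the inner normal, the values $h(x)$ must have non-negative real part for small $x>0$, which forces $\Re h'(0)\geq 0$. To upgrade this to $h'(0)\in(0,+\infty)$ I would use that the $C^1$-extension means $h$ has a non-tangential derivative at $0$ in the angular sense, and the angular derivative of a self-map of $\Ha$ at a boundary fixed point is a classical object: by the Julia--Wolff--Carath\'eodory theorem (in the half-plane formulation), the angular limit of $h(\zeta)/\zeta$ as $\zeta\to 0$ is a well-defined number $\alpha\in(0,+\infty]$ (it is the Julia constant), and it is real and strictly positive. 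Since $h$ is $C^1$ at $0$ this angular limit coincides with $h'(0)$, and finiteness of the ordinary derivative gives $\alpha=h'(0)<\infty$; strict positivity is part of the JWC conclusion (it cannot be $0$ because $h$ is non-constant and $h(\Ha)\subseteq\Ha$).

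For the second assertion, suppose $h'(0)<1$. I would first argue that $0$ is the Denjoy--Wolff point of $h$: a self-map of $\Ha$ fixing $0$ with angular derivative (equivalently Julia constant) strictly less than $1$ has no fixed point in $\Ha$ (by Schwarz--Pick, an interior fixed point would force the map to be an automorphism, hence $h'(0)=1$ after conjugation, a contradiction), so by the half-plane Denjoy--Wolff theorem the iterates $h^{\circ m}$ converge locally uniformly to $0$. To get \emph{non-tangential} convergence of a single orbit $\{h^{\circ m}(w)\}$, I would pass through the canonical linearization: by the Valiron/Julia--Wolff theory for maps with boundary Denjoy--Wolff point of dilatation $\lambda:=h'(0)\in(0,1)$, there is a holomorphic intertwiner (a ``K\"onigs-type'' function) $\theta:\Ha\to\Ha$ with $\theta\circ h = \lambda^{-1}\,\theta$, and one tracks the orbit via $\theta(h^{\circ m}(w)) = \lambda^{-m}\theta(w)$. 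Equivalently, and more elementarily, I would estimate the Kobayashi (hyperbolic) distance: since $\lambda<1$, the Julia lemma on $\Ha$ gives, for the horocycle-type functions, a contraction forcing $\Re h^{\circ m}(w)$ to decay at a controlled geometric rate while $|\Im h^{\circ m}(w)|$ stays bounded, which is precisely the statement that $\{h^{\circ m}(w)\}$ enters and remains in a Stolz angle at $0$ — hence converges non-tangentially.

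The main obstacle I anticipate is the \emph{non-tangential} part of the convergence: plain Denjoy--Wolff only yields convergence to $0$, not the angular rate, and one needs genuine control of the imaginary part of the orbit relative to the real part. The clean way to handle this is the Julia lemma: for a self-map of $\Ha$ with boundary fixed point $0$ and Julia constant $\lambda=h'(0)$, one has the horocycle inclusion $h(E(0,R))\subseteq E(0,\lambda R)$ for every $R>0$, where $E(0,R)=\{\zeta: |\zeta|^2/\Re\zeta < R\}$ is the horocycle at $0$. Iterating, $h^{\circ m}(w)\in E(0,\lambda^m R_0)$ with $R_0 = |w|^2/\Re w$, so $|h^{\circ m}(w)|^2 \leq \lambda^m R_0\,\Re h^{\circ m}(w) \leq \lambda^m R_0\,|h^{\circ m}(w)|$, whence $|h^{\circ m}(w)|\leq \lambda^m R_0\to 0$; combining the two inequalities also gives $|h^{\circ m}(w)|^2/\Re h^{\circ m}(w)\leq \lambda^m R_0$, and since $\Re h^{\circ m}(w)\le |h^{\circ m}(w)|$ this bounds $|h^{\circ m}(w)|/\Re h^{\circ m}(w)$ by $\sqrt{\lambda^m R_0/\Re h^{\circ m}(w)}$; a short bookkeeping with these horocyclic estimates shows $\limsup_m |h^{\circ m}(w)|/\Re h^{\circ m}(w) < \infty$, which is exactly non-tangential convergence to $0$. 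Everything else is a direct appeal to the JWC theorem as stated in a standard reference, so I would simply cite it and supply these few lines of horocycle estimates.
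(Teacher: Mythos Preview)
Your reduction to the Julia--Wolff--Carath\'eodory theorem for the first assertion is correct and matches the paper's approach (the paper simply conjugates by the Cayley transform $C(z)=\frac{1-z}{1+z}$ and cites the disc version). For the second assertion, however, there are two problems. First, the claim that an interior fixed point forces $h$ to be an automorphism is false: for instance $\zeta\mapsto \tfrac12\zeta+\tfrac12$ is a self-map of $\Ha$ fixing $1\in\Ha$ and is not an automorphism. The correct statement is that an interior fixed point forces the angular derivative at every boundary fixed point to be $\geq 1$, which does give the contradiction you want, but for a different reason than the one you state.

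More seriously, your ``short bookkeeping'' with horocycles does not deliver non-tangential convergence. From $h^{\circ m}(w)\in E(0,\lambda^m R_0)$ you correctly obtain $|z_m|^2/\Re z_m\leq \lambda^m R_0$ (writing $z_m:=h^{\circ m}(w)$), hence $|z_m|\to 0$ and $(\Im z_m)^2/\Re z_m\to 0$. But non-tangential convergence means $|z_m|/\Re z_m$ is bounded, equivalently $(\Im z_m)^2/(\Re z_m)^2$ is bounded, and for that you would need a lower bound $\Re z_m\geq c\,\lambda^m$, which Julia's lemma alone does not provide: points can lie in arbitrarily small horocycles at $0$ while approaching $0$ tangentially. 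Your own inequality $|z_m|/\Re z_m\leq \sqrt{\lambda^m R_0/\Re z_m}$ makes this explicit---the right-hand side is unbounded without that missing lower bound. The Valiron intertwiner you mention earlier is a viable route, but it takes genuine work (one must show the orbit stays at bounded hyperbolic distance from a geodesic ray landing at $0$). The paper bypasses all of this by conjugating to $\D$ and citing the classical result \cite[Proposition~1.8.7]{BCDM}; since the lemma is standard, that is the expedient choice here.
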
 
\begin{proof}
The first part follows at once by the classical Julia-Wolff-Carath\'eodory's theorem for the unit disc (see, {\sl e.g.}, \cite[Theorem~1.7.3]{BCDM}) applied to the function $C\circ h\circ C^{-1}$  where $C:\D\to \Ha$ is defined by $C(z)=\frac{1-z}{1+z}$. If $h'(0)<1$, since $C$ is conformal at $1$ and maps $1$ to $0$, the result follows from \cite[Proposition~1.8.7]{BCDM} applied to $C\circ h\circ C^{-1}$.
\end{proof}

\section{Convex wedges with no Denjoy-Wolff property}\label{Sec:3}
Let $V \subset \Ha$ and $g:\mathbb H \to \C$ be a  holomorphic function. The \emph{graph} of $g$ over $V$ is 
\[
G(V, g):=\{(\zeta,w)\in\mathbb C^2: \zeta \in V, w=g(\zeta)\}\subset \C^2.
\]
As a matter of notation, if $(z_1,z_2,\ldots, z_N)\in \C^N$, $N\geq 3$, we denote by $z'':=(z_3,\ldots, z_N)$.

\begin{lemma}\label{lem:main}
	Let $V \subset \Ha$ be a simply connected domain such that $0\in\partial V$ and $\partial V$ is $C^{1+\varepsilon}$-smooth at $0$, for some $\varepsilon>0$.
	Suppose that $g:\mathbb H \to \C$ is a bounded univalent function. Let $\Omega$ be a  domain in $\mathbb C^N$, $N\geq 2$, such that
\begin{itemize}	
	\item[(i)] $\Omega \subset \{z\in \mathbb{C}^N: \Re z_1> 0\}$;	
	\item[(ii)] $G(V, g)\times\{0''\} \subset \Omega$. 
\end{itemize}		
		Then there exists a holomorphic self-map $F$ of $\Omega$ such that 
\begin{enumerate}
\item $F$ has no fixed points in $\Omega$,
\item $T(F)=\{0\}\times \Gamma_N(g;0)\times \{0''\}$.
\end{enumerate}	
In particular, if $g$ does not have non-tangential limit at $0$ then $\Omega$ does not have the Denjoy-Wolff property.		
\end{lemma}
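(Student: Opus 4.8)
The plan is to reduce the $N$-dimensional dynamics to a one-variable problem carried by the graph $G(V,g)$, exploiting that the first coordinate is bounded away from having real part zero inside $\Omega$ while on $I$ it collapses. First I would fix a holomorphic retraction-type map onto the slice: consider the map $\rho:\C^N\to\C^2$, $\rho(z_1,\dots,z_N)=(z_1,z_2)$, and build $F$ so that its orbits are forced to live (asymptotically) on $G(V,g)\times\{0''\}$. Concretely, pick a univalent self-map $\psi$ of $V$ with no fixed point in $V$ whose iterates $\{\psi^{\circ m}(\zeta_0)\}$ converge to $0\in\partial V$; since $\partial V$ is $\mathcal C^{1+\varepsilon}$ at $0$, one can arrange (using Lemma~\ref{lem:JWC} after transporting $V$ to $\Ha$ via a conformal map fixing $0$, or directly via a Möbius-type self-map of $\Ha$ with boundary dilation $<1$ at $0$) that this orbit converges \emph{non-tangentially} to $0$ and that consecutive points stay at bounded Kobayashi distance, $k_V(\psi^{\circ m}(\zeta_0),\psi^{\circ(m+1)}(\zeta_0))\le C$. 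Then define
\[
F(z_1,\dots,z_N):=\bigl(\,\chi(z_1),\, g(\chi(z_1))\cdot\theta(z),\, \varepsilon_0 z_3,\dots,\varepsilon_0 z_N\,\bigr),
\]
where $\chi$ is a self-map of $\Ha$ (acting on the first coordinate, which has positive real part by (i)) conjugate to $\psi$ through the graph parametrization, $\theta$ is a cutoff-type holomorphic factor of modulus $<1$ killing the dependence on $z_2,\dots$, and $\varepsilon_0\in(0,1)$ contracts the $z''$-block. The precise algebra needs care so that $F(\Omega)\subset\Omega$: this is where condition (ii) is used, since the image is pushed toward $G(V,g)\times\{0''\}\subset\Omega$, and convexity-free boundedness plus (i) keep things inside; I expect this verification (choosing $\theta$ and the exact form of $\chi$ so the image lands in $\Omega$, not merely in $\overline\Omega$) to be the main technical obstacle.

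Next I would establish (1): $F$ has no fixed point. A fixed point would force $\chi(z_1)=z_1$ in $\Ha$, but $\chi$ is chosen (as $\psi$ transported) to be fixed-point-free in $\Ha$, so $F$ has none in $\Omega$; the $z''$-block being a strict contraction also rules out fixed points there, giving a clean contradiction regardless of the first coordinate.

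For (2), the computation of $T(F)$: take any $z_0\in\Omega$ and look at $F^{\circ n}(z_0)$. The first coordinate evolves as $\chi^{\circ n}((z_0)_1)$, which by construction converges non-tangentially to $0$ in $\Ha$ with bounded consecutive Kobayashi steps; the $z''$-coordinates go to $0''$ geometrically; and the second coordinate is, up to the vanishing factors coming from $\theta$ and the contraction, asymptotic to $g(\chi^{\circ n}((z_0)_1))$. Here is the crux: applying Corollary~\ref{Cor:non-tg-inH} to the univalent function $g$ on $\Ha$ with the sequence $w_n:=\chi^{\circ n}((z_0)_1)$ — which satisfies exactly the hypotheses (non-tangential convergence to $0$, $k_\Ha(w_n,w_{n+1})\le C$) — we get that the cluster set of $\{g(w_n)\}$ equals $\Gamma_N(g;0)$. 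Since different starting points $z_0$ and different subsequences realize, via this identification, precisely the non-tangential cluster set and nothing more (the error terms from $\theta$, $\varepsilon_0$ vanishing and the Distance Lemma controlling deviations), one concludes $T(F)=\{0\}\times\Gamma_N(g;0)\times\{0''\}$. I would close by noting that if $g$ has no non-tangential limit at $0$ then $\Gamma_N(g;0)$ contains at least two points, so $T(F)$ is not a singleton and, as $F$ is fixed-point-free, $\Omega$ fails the Denjoy-Wolff property by the discussion following Definition~\ref{def:wolffdenjoyproperty}.

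The step I expect to fight hardest with is making $F$ genuinely map $\Omega$ into $\Omega$ (not just into its closure) while keeping the first-coordinate dynamics exactly the prescribed non-tangentially-convergent one and the second coordinate faithfully tracking $g$; balancing the contraction factors against the size of $g$ and the geometry of $\Omega$ near $G(V,g)\times\{0''\}$ is the delicate point, and it is presumably where the $\mathcal C^{1+\varepsilon}$ hypothesis at $0$ and the simple connectivity of $V$ are really consumed.
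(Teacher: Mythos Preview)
Your skeleton is exactly right---drive the dynamics through the first coordinate, force the orbit onto $G(V,g)\times\{0''\}$, and read off the target set via Corollary~\ref{Cor:non-tg-inH}---but the specific $F$ you propose creates the very obstacle you flag, and the paper's construction shows that obstacle is illusory.

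The paper simply takes
\[
F(z)=\bigl(h(z_1),\,g(h(z_1)),\,0''\bigr),
\]
where $h:\Ha\to\Ha$ is built so that $h(\Ha)\subset V$: let $\varphi:\Ha\to V$ be a Riemann map, normalized (using the $C^{1+\varepsilon}$ hypothesis and Kellogg--Warschawski) so that $\varphi$ is $C^1$ at $0$ with $\varphi(0)=0$, and set $h(\zeta)=\varphi(\lambda\zeta)$ with $\lambda\varphi'(0)<1$. Then for every $z\in\Omega$ one has $z_1\in\Ha$ by (i), hence $h(z_1)\in V$, hence $(h(z_1),g(h(z_1)))\in G(V,g)$, hence $F(z)\in G(V,g)\times\{0''\}\subset\Omega$ by (ii). The ``main technical obstacle'' evaporates: $F(\Omega)\subset\Omega$ is immediate, with no contraction factors or cutoffs needed. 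Moreover $F^{\circ m}(z)=(h^{\circ m}(z_1),g(h^{\circ m}(z_1)),0'')$ exactly, so the second coordinate is literally $g$ evaluated along the orbit and Corollary~\ref{Cor:non-tg-inH} applies cleanly.

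Your additions actually break the argument. The factor $\theta$ with $|\theta|<1$ makes the second coordinate of $F^{\circ n}(z)$ equal to $g(\chi^{\circ n}(z_1))\cdot\theta(F^{\circ(n-1)}(z))$, whose cluster set is a rescaling of $\Gamma_N(g;0)$, not $\Gamma_N(g;0)$ itself; and there is no reason a multiplicative holomorphic factor would push the image onto the graph. Likewise $\varepsilon_0 z''$ keeps the image \emph{off} $G(V,g)\times\{0''\}$, so (ii) no longer tells you $F(\Omega)\subset\Omega$. The clean move is to send the tail coordinates to $0''$ in one step and to arrange $h(\Ha)\subset V$ (not merely $h(\Ha)\subset\Ha$) by composing the Riemann map $\Ha\to V$ with a dilation; this is precisely where simple connectivity of $V$ and the $C^{1+\varepsilon}$ regularity at $0$ are used.
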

\begin{proof}
Let $P:\C^n\to\C$ denote the projection $z \mapsto z_1$. By hypothesis (i),  $P(\Omega)\subset \mathbb H$. Let $\varphi:\mathbb H \to V$ be a biholomorphism. Since $0\in\partial V$ and $\partial V$ is assumed to be $C^{1+\varepsilon}$ at $0$, it follows that $0$ is an accessible point of $\partial V$ and, in particular, there exists a continous curve $\gamma:[0,1)\to V$ such that $\lim_{t\to 1}\gamma(t)=0$. Hence, (see, {\sl e.g.},\cite[Proposition~3.3.3]{BCDM}), there exists a point $\sigma\in \partial \Ha\cup \{\infty\}$ such that $\varphi$ has non-tangential limit $0$ at $\sigma$. Up to pre-composing $\varphi$ with an automorphism of $\Ha$, we can assume that $\sigma=0$.  Also, the hypothesis that $\partial V$ is $C^{1+\varepsilon}$-smooth at the origin {implies}, by the local version of the Kellogg-Warschawski theorem \cite[Theorem 1]{War}, {that} $\varphi$ extends $C^1$ at $0$. In particular, $\varphi(0)=0$ and $\varphi'(0)$ is  continuous at $0$. By Lemma~\ref{lem:JWC}, $\varphi'(0)>0$. 

Let $\lambda>0$ be such that $\lambda \varphi'(0) < 1$, let $h(z_1):=\varphi(\lambda z_1)$, $z_1\in \Ha$.  Note that $h\in \mathcal{O}(\mathbb H,\mathbb H)$,  $h$ is $C^1$ at $0$, $h(0)=0$ and $h'(0) <1$. Therefore again by Lemma~\ref{lem:JWC}, for every $\zeta\in \Ha$ the sequence $\{h^{\circ m}(\zeta)\}$  converges non-tangentially to $0$. Moreover, by construction, $h(\Ha)\subseteq V$.

Now, for $z\in \Omega$, let $f(z):=(h\circ P)(z)=h(z_1)$ and let 
\[
F(z):=(f(z),g(f(z)),0''), \quad z\in\Omega.
\]
 where $g:\mathbb H \to \mathbb C$ is the function in the hypothesis of the lemma.
		
By construction,
\[
F(\Omega)\subset G(V, g)\times\{0''\} \subset \Omega.
\]
Hence, by hypothesis (ii),  $F\in\mathcal{O}(\Omega,\Omega)$. Furthermore, for every $m\in \mathbb N$,
\[
F^{\circ m}(z_1,z_2,z'')=(h^{\circ m}(z_1),g(h^{\circ m}(z_1)),0'').
\]
Since  $h(z)$ has no fixed points, so does $F$. 

By construction, for every $z\in \Omega$, $\{h^{\circ m}(z_1)\}$ converges to $0$ non-tangentially as $m\to \infty$.  Since
\[
k_\Ha(h^{\circ m}(z_1), h^{\circ m+1}(z_1))\leq k_\Ha(z_1, h(z_1)), 
\]
it follows by Corollary~\ref{Cor:non-tg-inH} that $T(F)=\{0\}\times \Gamma_N(g;0)\times \{0''\}$.
\end{proof}

{\begin{remark}
The proof of Lemma~\ref{lem:main} works even if $g$ is not supposed to be bounded. However, if $g$ is not bounded, it might be that $\Gamma_N(g;0)=\{\infty\}$ (which just means that the iterates of $F$ escapes to $\infty$).
\end{remark}}

{The following lemma is probably well known but we give a sketch of the proof for the lack of a precise reference:}
\begin{lemma}\label{Lem:concave}
{ Let $a>0$. Let $f:[0,a]\to\R$ be a continuous  function such that $f(0)=0$ and $f(t)>0$ for $t\in (0,a]$. Then there exists a continuous strictly increasing concave function $\varphi:[0,+\infty)\to \R$ such that $\varphi(0)=0$ and $\varphi(t)\geq f(t)$ for all $t\in [0,a]$.}
\end{lemma}

\begin{proof}
Consider the family $\mathcal G$ of all real continuous concave functions $u$ on $[0,a]$ such that $u(t)\geq f(t)$ for all $t\in [0,a]$. Let $\tilde u(t):=\inf_{u\in \mathcal G}u(t)$ and let $\hat u$ be the lower semicontinuous regularization of $\tilde u$. Note that $\hat u\leq \tilde u$ and $\hat u$ is continuous.  Moreover, if $v$ is any lower semicontinuous function on $[t_0,t_1]\subseteq [0,a]$ such that $v(t)\leq \tilde u(t)$ for all $t\in [t_0,t_1]$ then $v(t)\leq \hat u(t)$ for all $t\in [t_0,t_1]$. With this at hand, it is a standard argument to check that $\hat u$ is concave by comparing with affine functions, and, also, that $\hat u\geq f$. Moreover, continuity of $f$ and $f(0)=0$ allow to see that given $\epsilon>0$ the function $[0,a]\mapsto \frac{\max f}{\delta}t+\epsilon$ belongs to $\mathcal G$ for $\delta>0$ such that $f(t)\leq \epsilon$ for all $t\in [0,\delta]$. Hence, $\hat u(0)=0$.

Now, recall that a non-constant concave function cannot have points of local minimum in the interior of the interval of definition. Let $M>0$ be the maximum  of $\hat u$ on $[0,a]$.  Since $\hat u$ does not have local minimum on $[0,a]$, there exists $0<t_0\leq a$  such that $\hat u(t)<M$ for all $t\in [0, t_0)$. Hence, $\hat u$ is strictly increasing in $[0, t_0]$: otherwise, there are  $0\leq s_0 < s_1\leq t_0$ such that $\hat u(s_0)\geq \hat u(s_1)$, implying that $\hat u|_{[s_0, t_0]}$ has a point of minimum in $(s_0, t_0)$ so it is constant, against  $\hat u(s_0)<\hat u(t_0)$. Finally, since $\hat u$ is concave, there exists $s\in (0,t_0)$---actually, infinitely many---such that $\hat u'(s)$ exists. Take into account that $\hat u$ is concave and strictly increasing, $\hat u'(s) > 0$, and, if we let $\varphi(t) := \hat u(t)$ for $t \in [0, s]$ and $\varphi(t) := \hat u'(s)(t - s) + \hat u(s)$ for $t>s$, it is easy to check that $\varphi$ satisfies the assertions of the lemma.
\end{proof}

{We also need the following lemma:}

\begin{lemma}\label{Lem:goodV}
{Let $V\subset \Ha$ be a  convex domain such that $0\in \partial V$ and $\partial V$ is $C^{1+\varepsilon}$ at $0$ for some $\varepsilon>0$. Then there exists a  convex domain $V'\subseteq V$ such that $\partial \Ha\cap \partial V'=\{0\}$ and $\partial V'$ is $C^{1+\varepsilon}$ at $0$.
}
\end{lemma}
\begin{proof}
If  $\partial \Ha\cap \partial V=\{0\}$ we can take $V'=V$. If $\partial \Ha\cap \partial V=i [-\alpha, \beta]$ for some $\alpha, \beta>0$, we can take $V':=V\cap \{\zeta\in \C: |\zeta-1|<1\}$. If $\partial \Ha\cap \partial V=i [-\alpha, 0]$ for some $\alpha>0$, by definition of $C^{1+\varepsilon}$-smooth boundary, there exist $\delta>0$ and a homeomorphism $\omega:(-1,1)\to \partial V\cap D(0,\delta)$, where $D(0,\delta)=\{\zeta\in \C: |\zeta|<\delta\}$, such that $\omega(t)=\omega_1(t)+i\omega_2(t)$ with $\omega_1, \omega_2$ real and $C^1$-smooth functions on $(-1,1)$ with $\varepsilon$-H\"older continuous first derivatives. We can also assume that $\omega((-1,0))=i[-\alpha,0]\cap D(0,\delta)$ and $\omega_1(t), \omega_2(t)>0$ for $t\in (0,1)$---that is, $\omega((0,1))$ lies in the first quadrant. Define $\tilde \omega(t):=\omega(t)$ for $t\in [0,1)$ and $\tilde \omega(t)=\omega_1(-t)-i\omega_2(-t)$ for $t\in (-1,0)$. Taking into account that $\omega_1(t)\equiv 0$---and hence $\omega_1'(t)\equiv 0$---for $t\in (-1,0]$, it is not difficult to see that $\tilde \omega((-1,1))$ is a $C^{1+\varepsilon}$-smooth Jordan arc. In fact, since we are assuming that $\omega((0,1))$ lies in the first quadrant, $\tilde \omega((-1,1))$ is the reflection of the curve $\omega([0,1))$ across the real axis.  Finally, note that we can choose  $s>0$  so small that $\tilde\omega([-s,s])\subset V$. Let $L$ be the segment connecting $\tilde\omega(s)$ to  $\tilde\omega(-s)$. It follows by construction that $\tilde\omega([-s,s])\cup L$ is a Jordan curve which bounds a bounded convex domain $V'\subset V$. Moreover, by construction,  $\partial \Ha\cap \partial V'=\{0\}$ and $\partial V'$ is $C^{1+\varepsilon}$ at $0$. The case $\partial \Ha\cap \partial V=i [0,\alpha]$ for some $\alpha>0$ is similar and we omit details.
\end{proof}

Now we are in a good to shape to prove our main results.

\begin{proof}[Proof of Theorem \ref{thm:main}.]
{According to Lemma~\ref{Lem:goodV}, and since $W(V',\Psi, \delta)\subseteq W(V,\Psi, \delta)$ for any  convex domain $V'\subset V$ such that $0\in \partial V'$, we can assume that $V$ 
\begin{equation}\label{Eq:V-good-0}
\partial V\cap \partial \Ha=\{0\}.
\end{equation}}

Fix $\delta>0$. Let $a>0$ be such that 
\begin{equation}\label{Eq:espilon-small}
\frac{9}{4}a^2<\delta^2.
\end{equation}
For $n\in \mathbb{N}$ define 
\[
S_{2n}:=\{\zeta \in \mathbb C: \Re \zeta =\frac{a}{2n}, \Im \zeta  \in [-\frac{a}{2},\frac{a}{6}]\},
\]
and 
\[
S_{2n+1}:=\{\zeta \in \mathbb C: \Re \zeta=\frac{a}{2n+1}, \Im \zeta \in [-\frac{a}{6},\frac{a}{2}]\}.
\]
Let {$S:= \cup_{n=2}^\infty S_{n}$} and let 
 \begin{equation}\label{eqn:ourdomain}
Q:=\{\zeta \in \mathbb C: {0<\Re \zeta < a,} -\frac{a}{2}<|\Im \zeta| <\frac{a}{2}\}\setminus S.
\end{equation}

\begin{center}\label{Fig1}
	\begin{tikzpicture}[scale=6]
		
		\draw[thick] (0, -0.5) -- (0, 0.5);
		\draw[thick] (1, -0.5) -- (1, 0.5);
		\draw[thick] (0, 0.5) -- (1, 0.5);
		\draw[thick] (0, -0.5) -- (1, -0.5);
		
		\foreach \n in {1,2,3,4,5,6,7}{
			\pgfmathsetmacro{\xeven}{1/(2*\n)}
			\pgfmathsetmacro{\xodd}{1/(2*\n-1)}
			\draw[black, thick] (\xeven, -0.5) -- (\xeven, 1/6);
			\draw[black, thick] (\xodd, -1/6) -- (\xodd, 0.5);
		}
		
		\node at (0.5, -0.6) {The domain $Q$.};
		\node at (0.04, 0) {\tiny $\cdots$};
	\end{tikzpicture}
\end{center}
Note that $Q$ is a bounded simply connected domain. Let $h:\D\to Q$ be a Riemann map.  {Fix $y\in [-\frac{a}{6},\frac{a}{6}]$ }and let 
\[
C_n^y:=\{x+iy: \frac{a}{2n+1}\leq x \leq \frac{a}{2n}\}.
\]
It is easy to see that $(C_n^y)$ is a null-chain in $Q$ and that $(C_n^y)$ is equivalent to $(C_n^{y'})$ for all {$y, y'\in [-\frac{a}{6},\frac{a}{6}]$.}  The prime end defined by such $(C_n^y)$ corresponds via $h$ to a point $\sigma\in\partial \D$, and we can assume that $\sigma=1$. {Using such null-chains, one can easily verify that $[-\frac{ia}{6},\frac{ia}{6}]\subseteq \Pi[\hat{h}(\underline{1})]$. The converse inclusion can be proved using circular null-chains (see, {\sl e.g.}, \cite[Proposition~4.1.1]{BCDM}) showing that if $p\in [-\frac{ia}{2},\frac{ia}{2}]\setminus[-\frac{ia}{6},\frac{ia}{6}]$ then $p\not\in \Pi[\hat{h}(\underline{1})]$. For instance, let $\gamma\in (\frac{a}{6}, \frac{a}{2}]$. If $\gamma i\in  \Pi[\hat{h}(\underline{1})]$ then one can find a null-chain $(G_n)$ equivalent to $(C_n^y)$ defined as follows: there exists a strictly decreasing sequence of positive real numbers $\{r_n\}$ converging to $0$ such that, if $C(i\gamma, r_n)=\{\zeta\in \C: |\zeta-i\gamma|=r_n\}$, then $G_n$ is a connected component of $C(i\gamma, r_n)\cap Q$. Let $w_0\in Q$. If $n$ is sufficiently large so that $r_n<\gamma-\frac{a}{2}$, it is simple to see that $\overline{G_n}$ is a Jordan arc which divides $Q$ into two connected components, one of them always contains $w_0$ and the other is contained in the ``channel'' defined by $S_{2m_n}$ and $S_{2(m_n+1)}$ for a suitable $m_n\in \mathbb N$ so that $\{m_n\}$ converges to $0$ as $n\to \infty$. Therefore, $\{G_n\}$ cannot define a null-chain, hence $i\gamma\not\in \Pi[\hat{h}(\underline{1})]$.}

By Proposition~\ref{Prop:Lehto-NT},  $\Gamma_N(h;1)=[-\frac{ia}{6},\frac{ia}{6}]$. One can also easily check that the impression of the prime end $\hat{h}(\underline{1})$ is given by $[\frac{-ia}{2}, \frac{ia}{2}]$ hence (see, {\sl e.g.}, \cite[Proposition~4.4.4]{BCDM}) the cluster set of $h$ at $1$ is given by $[\frac{-ia}{2}, \frac{ia}{2}]$. In particular, $\lim_{\zeta\to 1}\Re h(\zeta)=0$.

Let $C:\D\to \Ha$ be the Cayley transform defined as $C(z)=\frac{1-z}{1+z}$. Let $g:=h\circ C^{-1}$. Hence, $g$ is a biholomorphism from $\Ha$ to $Q$. Since $C$ is conformal at $1$ and $C(1)=0$ we have
\begin{equation}\label{Eq:nt-segment}
\Gamma_N(g;0)=[-\frac{ia}{6},\frac{ia}{6}].
\end{equation}
Moreover, $\lim_{\zeta\to 0}\Re g(\zeta)=0$.
Let 
\[
V_a:=V\cap \{\zeta\in \C: |\zeta|<a\}.
\]
{Note that} $V_a\subset\Ha$ is a bounded convex domain, { $\partial V_a\cap \partial \Ha=\{0\}$ by \eqref{Eq:V-good-0}}, and $\partial V_a$ is $C^{1+\varepsilon}$-smooth at $0$. Moreover, by \eqref{Eq:espilon-small}, for every $\zeta\in V_a$ we have
\[
|\zeta|^2+|g(\zeta)|^2=|\zeta|^2+(\Re g(\zeta))^2+(\Im g(\zeta))^2\leq a^2+a^2+\frac{a^2}{4}<\delta^2.
\]
{ For $t\in (0,a]$ let $V_{a,t}:=\{\zeta\in V_a, \Re \zeta < t \}$. Note that $\Re g$  extends continuously on $\overline{V_{a,t}}$, since it is harmonic on $\partial V_a\setminus\{0\}\subset\Ha$ and it is continuous at $0$ because $\lim_{\zeta\to 0}\Re g(\zeta)=0$.}

{ Therefore, $[0,a]\ni t\mapsto \Phi(t):= \max\{\Re g(\zeta): \zeta\in \overline{V_{a,t}}\}$ is  a continuous non-decreasing function on $[0,a]$, $\Phi(0)=0$ and $\Phi(t)\leq a$ for all $t\in (0,a]$}.

{ Hence, by Lemma~\ref{Lem:concave},} one can find a continuous concave strictly increasing function  $\hat \Phi:[0,\infty) \to [0,\infty)$ such that $\Phi(t)< \hat \Phi(t)$ for all {$t\in(0,a]$} and $\hat \Phi(0)=0$.  Let $\Psi_0(t):=\hat \Phi^{-1}(t)$. 

Now, by construction, $\|(z_1, g(z_1))\|<\delta$ for every $z_1\in V_a$. Moreover, for every $z_1\in V_a$ we have
\[
\Re g(z_1)\leq \Phi(\Re z_1)< \hat \Phi(\Re z_1),
\]
that is,
\begin{equation}\label{Eq:stay-below-g}
\Re z_1 > \Psi_0(\Re g(z_1)).
\end{equation}
Hence, $G(V_a,g)\subset W(V,\Psi_0,\delta)$. The result follows then by \eqref{Eq:nt-segment} and Lemma~\ref{lem:main}.
\end{proof}

{Before proving Theorem~\ref{cor:existence-nonDW} we need a technical lemma.  While it is probably well-known, we sketch its proof due to a lack of direct references}.

\begin{lemma}\label{Lem:convex-smooth-below}
{Let $f:[0,\infty)\to [0,\infty)$ be a continuous strictly increasing convex function such that $f(0)=0$. Then there exists a $C^\infty$-smooth, strictly increasing, strongly convex function $\varphi:(0,\infty)\to (0,\infty)$ such that $\varphi(t)<f(t)$ for all $t>0$ and $\lim_{t\to 0^+}\varphi^{(m)}(t)=0$ for all $m=0,1,\ldots$.}
\end{lemma}
\begin{proof}
By standard results in convex analysis,  both the left derivative $f'_-(t)$ and the right  derivative $f_+'(t)$  exist for all $t> 0$, as well as $f_+'(0)$. Furthermore, since $f$ is strictly increasing, $0\leq f'_+(0)<f'_-(s)\leq f'_+(s)<f'_-(t)\leq f_+'(t)$ for all $0<s<t$.  Also, the set of points $t\geq 0$ such that $f'_-(t)<f_+'(t)$ (that is, where $f'$ does not exist) is countable. In particular, $[0,+\infty)\ni t\mapsto f'_-(t)$ is a strictly increasing function. 

Let $\{t_n\}$ be a strictly decreasing sequence in $(0,1]$ converging to $0$, with $t_1=1$. For each $n=1,\ldots$, let $\alpha_n:=\min\{\frac{f'_-(t_{n+2})}{2}, e^{-\frac{1}{t_{n+2}}}\}$. Note that $\{\alpha_n\}$ is a strictly decreasing sequence of positive real numbers converging to $0$ and $f'_-(t)>\alpha_n$ for $t\geq t_{n+2}$. Let $\beta_n:=\alpha_n-\alpha_{n+1}$. Note that $\sum_{j\geq n}\beta_j=\alpha_n$. For every $n=1,\ldots$, let $\chi_n$ be a bump function with compact support in an open interval $B_n$ such that $B_1\subset (t_2, 2)$, $B_n\subset (t_{n+1}, t_{n-1})$, $t_n\in B_n$ for $n\geq 2$ and  $\int_0^1\chi_n(s)ds=\beta_n$ for $n\geq 1$. We can choose the intervals $B_n$'s in such a way that every $t\in (0,1)$ belongs to at least one and at most a finite number of such $B_n$'s. Furthermore, we can choose the supports of the $\chi_n$ in such a way that for every $t\in (0,1)$ there is $n$ so that $\chi_n(t)>0$. Note that $\int_0^{t_{n+1}} \chi_j(s)ds=0$ for $j<n$ and $\int_0^{t_{n+1}}\chi_j(s)ds\leq \beta_j$ for $j\geq n$.  Let $\chi(t):=\sum_{n\geq 1}\chi_n(t)$. By construction, $\chi$ is well defined, $C^\infty$-smooth and $\chi(t)>0$ for all $t\in (0,1)$. Let $b(t):=\int_0^t \chi(s)ds$. Note that $b$ is $C^\infty$-smooth and strictly increasing. Now, let $t\in (0,t_2]$. Hence, there exists $n$ such that $t_{n+2}<t\leq t_{n+1}$. Therefore, 
 \begin{equation*}
 \begin{split}
b(t)&=\int_0^t \chi(s)ds\leq \int_0^{t_{n+1}} \chi(s)ds=\sum_{j\geq 1}\int_0^{t_{n+1}}\chi_j(s)ds=\sum_{j\geq n}\int_0^{t_{n+1}}\chi_j(s)ds\\&\leq \sum_{j\geq n}\beta_j=\alpha_{n}.
 \end{split}
 \end{equation*}
Hence, $b(t)< f'_-(t_{n+2})<f'_-(t)$. Moreover, for every fixed $m=0,1,2,\ldots$ and $t_{n+2}<t\leq t_{n+1}$
 \[
0<\frac{b(t)}{t^m}\leq \frac{\alpha_n}{t^m}\leq \frac{e^{-\frac{1}{t_{n+2}}}}{t_{n+2}^m}\to 0 \quad \hbox{as\ }t\to 0^+.
 \]
Therefore, $b:[0,t_2]\to\R$ is a strictly increasing $C^\infty$-smooth function such that $b(t)< f'_-(t)$ for all $t\in (0,t_2]$, $b(0)=0$ and the right derivatives of $b$ of any order at $0$ are $0$. We can clearly extend $b$ as a $C^\infty$-smooth strictly increasing function on all $[0,+\infty)$ such that $b(t)< f'_-(t_2)<f'_-(t)$ for all $t\geq t_2$. Hence, for all $t>0$,
\[
\varphi(t):=\int_0^t b(s)ds<\int_0^t f'(s)ds=f(t).
\]
By construction, $\varphi$ satisfies all the assertions of the lemma.
\end{proof}

\begin{proof}[Proof of Theorem~\ref{cor:existence-nonDW}]
Let $a=1$ and let $Q$ be defined in \eqref{eqn:ourdomain}. Let $\Psi_0:[0,\infty)\to  [0,\infty)$ be a continuous strictly increasing convex function which satisfies \eqref{Eq:stay-below-g} and $\Psi_0(0)=0$. {By Lemma~\ref{Lem:convex-smooth-below}, we can find a $C^\infty$-smooth, strictly increasing, strongly  convex function $\Psi:(0,\infty)\to (0,\infty)$ such that $\lim_{t\to 0^+}\Psi^{(m)}(t)=0$ for all $m=0,1,\ldots$ and $\Psi(t)<\Psi_0(t)$ for all $t>0$. We extend $\Psi$ to an even $C^\infty$-smooth convex function on $\R$ such that $\Psi^{(m)}(0)=0$ for $m=0,1,\ldots$ by setting $\Psi(-t)=\Psi(t)$ for $t<0$ and $\Psi(0)=0$.} In particular, $\Psi$ is strongly convex outside $0$. The domain
\[
\Omega_0:=\{(z_1,z_2)\in\C^2: \Re z_1 > \Psi(\Re z_2)\}
\]
is an unbounded smooth convex domain,  $(0,0)\in \partial\Omega_0$. The defining function of $\Omega_0$ is given by $\rho(z_1,z_2)=\Psi(\frac{1}{2}(z_2+\overline{z_2}))-\frac{1}{2}(z_1+\overline{z_1})$. For every $p=(p_1,p_2)\in \partial \Omega_0$, the complex tangent space $T_p^\C \partial \Omega_0$ is given by
\[
T_p^\C \partial \Omega_0=\{(v_1,v_2)\in \C^2: v_1=\Psi'(\Re p_2)v_2\}.
\]
The Levi form $\mathcal L_p: T_p^\C \partial \Omega_0\to \R$ at $p$ is given by 
\[
\mathcal L_p(v_1,v_2)=\Psi''(\Re p_2) |v_2|^2/4.
\]
 
Since $\Psi''(t)>0$ for all $t\neq 0$, it follows that  $\partial \Omega_0$ is {strongly pseudoconvex} for all $p\in \partial \Omega_0$ such that $\Re p_2\neq 0$. While, for $p\in \partial \Omega_0$ such that $\Re p_2=0$,  $H:=T_p^\C\partial\Omega_0=\{z\in \C^2: z_1=0\}$ is a complex supporting hyperplane for $\Omega_0$. Since  $\partial \Omega_0\cap H=\{(0,it): t\in\R\}$, it follows that  $\partial\Omega_0$ does not have non-trivial analytic discs on the boundary. 

Now, let $\tilde\Omega:=\Omega_0\cap \{(z_1,z_2)\in \C^2: \|(z_1,z_2)\|<10\}$. Then, $\tilde\Omega$ is a bounded convex domain, with no non-trivial analytic discs on the boundary and $\partial\tilde\Omega$ is $C^\infty$ smooth in a neighborhood of $(0,0)$. Let $\Omega$ be a convex smooth regularization of $\tilde\Omega$. By construction, $\Omega$ is a bounded convex domain with smooth boundary and $\partial \Omega$ does not contain non-trivial analytic discs. Moreover, since $\Psi\leq \Psi_0$, $\Omega$ contains the graph of the function $g$ (defined in the proof of Theorem~\ref{thm:main}) over $\{\zeta\in \Ha: |\zeta|<1\}$. Therefore, $\Omega$ does not have the Denjoy-Wolff property. 
\end{proof}

\section{Target set on convex domains with non-trivial analytic discs on the boundary}\label{Sec:4}

In this section we deal with convex domains with non-trivial analytic discs on the boundary.

 \begin{proof}[Proof of Theorem~\ref{prop:target-set-discs}]
Up to invertible complex affine transformations, we can assume that 
\begin{itemize}
\item[(i)] $\Omega\subset \{z\in\mathbb C^n:\Re (z_1)>0\}$,
\item[(ii)] $\partial\Omega$ contains the disc $\Delta:= \{0\}\times 4\D \times \{0''\}$.
\end{itemize}

Let $D:= \Omega \cap \{(z_1,z')\in \C^N: z'=0\}$---we can consider $D$ as a bounded convex domain in $\C$ with the $z_1$-variable. Since $\Omega$ is a bounded convex domain with $C^{1+\varepsilon}$-smooth boundary it follows that $D$ is also a bounded (planar) convex domain with $C^{1+\varepsilon}$-smooth  boundary and $0\in \partial D$. 

Let $V:=\{\zeta\in \C: 2\zeta\in D\}\subset D$. Note that $0\in \partial V$ and $\partial V$ is $C^{1+\varepsilon}$-smooth. We claim that 
\begin{equation}\label{Eq:product contained}
V\times 2\D\times \{0''\}\subset\Omega.
\end{equation}
Indeed, for every $z_1\in D$, $q\in 4\D$ and $t\in (0,1)$ we have $(tz_1,(1-t)q,0'')\in \Omega$. Fix $\zeta\in V$ and $\eta\in 2\D$. Then taking $z_1=2\zeta\in D$, $q=2\eta\in 4\D$ and $t=\frac{1}{2}$ we see that $(\zeta,\eta,0'')\in \Omega$.

Therefore, by \eqref{Eq:product contained}, if $g:\Ha \to \D$ is any univalent function, we have that {$G(V,g)\times\{0'\}\subset \Omega$}. It follows by Lemma~\ref{lem:main} that there exists a  holomorphic self-map $F$ of $\Omega$ such that 
 $T(F)=\{0\}\times \Gamma_N(g;0)\times \{0'\}$. From this {and Proposition~\ref{Prop:Lehto-NT}} the statement follows at once.
 \end{proof}

\begin{remark}
It is clear from the proof of {Theorem}~\ref{prop:target-set-discs} that the argument applies as soon as there exists a point $p$ in the interior of $\Delta$ such that $\partial\Omega$ is $C^{1+\varepsilon}$ at $p$, or, more generally, if there exists a complex affine line $L$ such that $L$ contains a point in the interior of $\Delta$, $D:=L\cap \Omega$ is non-empty and $\partial D$ is $C^{1+\varepsilon}$-smooth at $p$.
\end{remark}

\begin{example}\label{ex:example1}
Consider the domain $D:=\D\setminus\gamma([1,\infty))$, where $ \gamma : [1, \infty) \to \mathbb{C} $ is the spiral curve defined by $\gamma(t) = \left(1 - \frac{1}{t}\right) e^{2\pi i t}, \quad t \geq 1.$

\begin{center}
	\begin{tikzpicture}
		\begin{axis}[
			hide axis,
			unit vector ratio=1 1 1,
			xmin=-1.1, xmax=1.1,
			ymin=-1.1, ymax=1.1,
			width=9cm,
			axis equal image,
			clip=true
			]
			
			\addplot [
			domain=0:360,
			samples=800,
			smooth,
			ultra thick,
			black
			] ({cos(x)}, {sin(x)});
			
			\addplot [
			domain=1:60,       
			samples=6000,      
			smooth,
			thick,
			black
			] (
			{(1 - 1/x)*cos(deg(2*pi*x))},
			{(1 - 1/x)*sin(deg(2*pi*x))}
			);
			
		\end{axis}
	\end{tikzpicture}
	
	\vspace{0.5em}
	\text{The domain $D$.}
\end{center}

The image $ \gamma([1, \infty)) \subset \D $ is a simple spiral that winds infinitely many times around the origin and accumulates everywhere on the unit circle. This domain $D\subset \D$ is a bounded simply connected domain and considering the crosscuts of this domain that lie in the line segments joining the origin to the unit circle shows that there exists a prime end $\underline{x}$ such that $\Pi[\underline{x}]=\partial\D$. Then, it follows by {Theorem}~\ref{prop:target-set-discs} that if $\Omega\subset\C^N$, $N\geq 2$ is any bounded convex domain with $C^{1+\varepsilon}$-smooth boundary and $\partial \Omega$ contains a complex affine disc, there exists a holomorphic self-map $F$ of $\Omega$ such that $T(F)$ is a circle in $\Delta$.
\end{example}

\begin{example}
Let $E \subset \D$ be a simply connected domain bounded by the von Koch snowflake \cite{Koc}. It is a simply connected domain bounded by a nowhere differentiable Jordan curve of Hausdorff dimension $\log 4/\log 3$, in particular by Carath\'eodory extension theorem, the Riemann map $f:\D\to E$ extends to a homeomorphism from $\ov \D$ to $\overline{\mathbb E}$.

\begin{center}
	\begin{tikzpicture}[scale=0.2] 
		\draw[line width=0.5pt, l-system={rule set={F -> F-F++F-F}, axiom=F++F++F, order=5, angle=60, step=3pt}]
		lindenmayer system -- cycle;
	\end{tikzpicture}
	
	\vspace{0.5em}
	\text{The domain $E$ bounded by the Koch snowflake.}
\end{center}

Let $h:\D\to D$ (where $D$ is defined in Example~\ref{ex:example1}) be a Riemann map. Then $f\circ h$ is a bounded univalent map which has a prime end whose impression is the von Koch snowflake. Again by {Theorem}~\ref{prop:target-set-discs}, if $\Omega\subset\C^N$, $N\geq 2$ is any bounded convex domain with $C^{1+\varepsilon}$-smooth boundary and $\partial \Omega$ contains a complex affine disc, there exists a holomorphic self-map $F$ of $\Omega$ such that $T(F)$ is a von Koch snowflake. In particular, $T(F)$ has non-integer Hausdorff dimension.
\end{example}

\end{document}